\theoremstyle{thmstyleone}%
\newtheorem{theorem}{Theorem}[section]% meant for sectionwise numbers
\newtheorem{proposition}[theorem]{Proposition}% 
\theoremstyle{thmstyletwo}%
\theoremstyle{thmstylethree}%
\newtheorem{definition}{Definition}[section]%
\newtheorem{remark}{Remark}[section]%
\newtheorem{lemma}{Lemma}[section]
\newtheorem{corollary}{Corollary}[section]
\numberwithin{equation}{section}
\newtheorem{solution}{Solution}[section]%
\renewenvironment{proof}{{\bfseries Proof}}{\qed}
\begin{document}

\title[Ergodic trading model and call option pricing]{Some Applications of Log-Ergodic Processes: Ergodic Trading Model and Call Option Pricing Using the Irrational Rotation}

%%=============================================================%%
%% Prefix	-> \pfx{Dr}
%% GivenName	-> \fnm{Joergen W.}
%% Particle	-> \spfx{van der} -> surname prefix
%% FamilyName	-> \sur{Ploeg}
%% Suffix	-> \sfx{IV}
%% NatureName	-> \tanm{Poet Laureate} -> Title after name
%% Degrees	-> \dgr{MSc, PhD}
%% \author*[1,2]{\pfx{Dr} \fnm{Joergen W.} \spfx{van der} \sur{Ploeg} \sfx{IV} \tanm{Poet Laureate} 
%%                 \dgr{MSc, PhD}}\email{iauthor@gmail.com}
%%=============================================================%%

\author*[1]{\fnm{Kiarash} \sur{Firouzi}}\email{kiarashfirouzi91@gmail.ac.ir}

\author[2]{ \fnm{Mohammad} \spfx{Jelodari} \sur{Mamaghani}}\email{j\_mamaghani@atu.ac.ir}
%\equalcont{The author critically revised the work.}

%\author[1,2]{\fnm{Third} \sur{Author}}\email{iiiauthor@gmail.com}
%\equalcont{These authors contributed equally to this work.}

\affil*[1]{\orgdiv{Department of Mathematics}, \orgname{Allameh Tabataba'i University}, \orgaddress{\street{Dehkadeh Olympic}, \city{Tehran}, \postcode{1489684511}, \state{Tehran}, \country{Iran}}}

\affil[2]{\orgdiv{Department of Mathematics}, \orgname{Allameh Tabataba'i University}, \orgaddress{\street{Dehkadeh Olympic}, \city{Tehran}, \postcode{1489684511}, \state{Tehran}, \country{Iran}}}

%\affil[3]{\orgdiv{Department}, \orgname{Organization}, \orgaddress{\street{Street}, \city{City}, \postcode{610101}, \state{State}, \country{Country}}}

%%==================================%%
%% sample for unstructured abstract %%
%%==================================%%

\abstract{
Due to the increasing popularity of futures trading among financial market participants, the risk management of these instruments is crucial. In this paper, we introduce a model for estimating the ideal time for leaving a trading position on a stock. Also, using ergodic theorems, we investigate the European call option pricing problem using a stochastic irrational rotation on the unit circle. Utilizing the properties of log-ergodic processes, we use the time average of the stochastic process of risky assets instead of expectations in our calculations.}

\keywords{Call option, Futures trading, Irrational rotation, Log-ergodic process, Partially ergodic process}

%%\pacs[JEL Classification]{D8, H51}

\pacs[MSC Classification]{37A30, 37H05, 60G10, 91B70}

\maketitle
\section{Introduction}
The use of ergodic processes in financial modeling has evolved significantly over the past decades. Early research focused primarily on the theoretical underpinnings of ergodic systems in economics and their application to macroeconomic models. However, more recent studies have explored their utility in financial markets, particularly in developing trading strategies and pricing financial derivatives.

One of the key developments in this area was introduced by Ornstein and Uhlenbeck \cite{orn} with their work on mean-reverting stochastic processes. Their model laid the groundwork for subsequent research on ergodic processes in finance. Feller \cite{feller} extended this concept by exploring the implications of ergodicity for stochastic differential equations, furthering the application of these models in finance.

The relevance of ergodic processes in option pricing was significantly advanced by Cox, Ingersoll, and Ross \cite{cox} with the development of the Cox-Ingersoll-Ross (CIR) model. Their model, which incorporates mean reversion, provided a foundation for the pricing of interest rate derivatives and has been widely adopted in the literature.

In recent years, several studies have expanded on the classical models by incorporating various modifications to better capture market dynamics. For instance, Carr and Wu \cite{carr} introduced a framework that combines stochastic volatility with ergodic processes, providing a more comprehensive approach to pricing options. Similarly, Fouque et al. \cite{fou} developed a multiscale stochastic volatility model that integrates ergodicity and has been shown to improve pricing accuracy for options on assets with mean-reverting properties.

The futures market is one of the most popular active markets in the world. The beginning of the first real organized futures market dates back to 1710 and to the Dōjima Rice Exchange located in Osaka, Japan \cite{105}. Today, futures market has a nature far beyond their agricultural origins. With the existence of the New York Mercantile Exchange, the trading and hedging of financial products using futures contracts have played a significant role in the global economic system, reaching the highest trading volume of over 5.1 trillion dollars per day in 2005 historically \cite{106}. The highest record of trading volume per day in 2024 for trading and hedging of financial products using futures contracts is held by E-mini S\&P 500 futures on Chicago Mercantile Exchange (CME). The average daily trading volume reached over 400 billion dollars per day, given the average price of each contract of around 200,000 dollars \cite{tv1,tv2}. 
%The New York Stock Exchange partnered with the Amsterdam-Brussels-Lisbon-Paris Stock Exchange (Euronext Electronic Exchange) to form the first transcontinental stocks and options exchange in 2006. The formation of Euronext, plus the growth of Internet futures trading platforms created by developed companies, indicates the increasing trend of competition in online futures and options services in the coming years \cite{108}. In terms of trading volume, the National Stock Exchange of India in Mumbai is the largest futures stock exchange in the world \cite{107}.

%Despite its popularity, the high risk of futures trading is considered a deterrent factor for many risk-averse traders \cite{106,109}. Generally, predicting the behavior of financial markets is difficult, although research using machine learning is available in this field \cite{110}. 

Harrison Hong presented a model for the futures market by analyzing the returns and trading patterns of its participants \cite{111}. Binh Do, \cite{113}, studied some trading models, consisting of long and short positions, on a pair of shares (risky assets). Researchers have studied futures trading in terms of risk and return \cite{gor}. However, very few resources are available for estimating the appropriate time to take a trading position on a stock.
% Rama Cont and his colleagues have described the risk and return of dynamic trading in the form of fluctuations of the market value of the basket from a reference level, and based on the price fluctuation, relative to that reference level, in a path-dependent manner, they have provided a framework for risk analysis of trading \cite{112}.
 In this paper, we use the method of \cite{112} and the concept of log-ergodic process \cite{116} to model futures trading on a risky asset and estimate the ideal time to take a trading position. To this end, we assume that the price process of a risky asset follows a log-ergodic process. Regarding the price process as an ergodic dynamical system on the unit circle (Irrational rotation), we substitute the time average of the price process with mathematical expectation in our calculations. Artur Avila et al have studied the behavior of the random walk process using the irrational rotation and have presented a new proof of the J. Beck central limit theorem \cite{114}.

The rest of the paper is as follows:

We introduce some necessary concepts in section \ref{sec2}. In section \ref{sec3}, we present a trading model. Using the properties of the partially ergodic processes, we introduce the dynamics of recurrence times and a time interval for leaving a trading position. In section \ref{sec4}, we define the irrational rotation on the unit circle and study its properties. In section \ref{sec5}, we study the European call option pricing problem using the irrational rotation. In section \ref{sec6}, we solve the Black-Scholes ergodic partial differential equation presented in our recent work \cite{116}. In section \ref{sec7}, we conclude the paper and suggest some directions for future research.

\section{Preliminaries}\label{sec2}
Throughout the paper, we use the filtered probability space
$(\Omega,\mathcal{F},\mathbb{P},(\mathcal{F}_t)_{t\geq0})$,  in which   $\Omega$ is the space of events, $\mathcal{F}$ is a $\sigma$-algebra,  $\mathbb{P}$ is a probability measure, and sub $\sigma$-algebra
$\mathcal{F}_t$ represents the information of the financial market up to time $t$, which is generated by the standard Wiener process $W_t$.

\begin{definition}(Log-ergodicity)\label{logergodic}
A positive stochastic process $X_t$ is considered logarithmically ergodic (log-ergodic) if its logarithm is mean ergodic. More precisely, the positive stochastic process $X_t$ is log-ergodic if the process $Y_t=\ln(X_t)$ satisfies
	\begin{equation}\label{limer}
		\overline{<Y>}:=\lim_{T\rightarrow \infty}\frac{1}{T}\int_0^T (1-\frac{\tau}{T})\mathbf{Cov}_{yy}(\tau)d\tau=0, \quad \forall\tau\in[0,T].
	\end{equation}
	Where $\mathbf{Cov}_{yy}(\tau)$ is the covariance of  $Y_\tau$.
\end{definition}
For more information on this and related concepts refer to \cite{116}.
\begin{definition}\label{deferc}
	Let $W_t$ be a standard Wiener process and $\beta>\frac{3}{2}$. For all $t,s\in[0,T]$, we define the ergodic maker operator of the process\\ $Y_t^\prime=Y_0^\prime+D_t+R_t$ as
	\begin{equation}\label{xi}
		Z_\delta:=\xi_{\delta,W_\delta}^{\beta}[Y_t^\prime]:=0\cdot Y_0^\prime+\frac{W_T}{T^{\beta}}\cdot D_\delta+\frac{1}{T^{\beta}}\cdot R_\delta=D_\delta^{T,W_T}+R_\delta^T,
	\end{equation}
	where $\delta=t-s$ for $t>s$.
\end{definition}
We introduce the definition of the inverse of the ergodic maker operator, which we use in section \ref{sec6}.
\begin{definition}(Inverse of Ergodic Maker Operator)
	For any mean ergodic random process $Z_\delta=D_\delta^{T,W_T}+R_\delta^{T}$ we define the IEMO as follows
	\begin{equation}\label{IEMO}
		\xi_{t,W_t}^{-\beta}[Z_\delta]=c+ \frac{T^\beta}{W_T}\cdot D_t^{T,W_t}+T^{\beta}\cdot R_t^{T},\quad t>0,
	\end{equation}
	where $c$ is a constant. 
\end{definition}
\begin{lemma}\label{key}
	For all $c>0$ and any mean ergodic process $Y_t=D_t+R_t$,
	$$\xi_{t,W_t}^{-\beta}\big[\xi_{\delta,W_\delta}^\beta[c+Y_t]\big]=c+Y_t,$$
	where $c$ is a constant.
\end{lemma}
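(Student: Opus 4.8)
The lemma is essentially a verification that the two displayed formulas (\ref{xi}) and (\ref{IEMO}) are inverse to one another on the level of the pair (drift component, remainder component), together with an additive constant. So the plan is simply to compose the two operators and track every prefactor; no estimates are involved.

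First I would evaluate the ergodic maker operator on $c+Y_t$. Reading $c+Y_t$ in the form $Y_t^\prime = Y_0^\prime + D_t + R_t$ of Definition \ref{deferc}, with $Y_0^\prime = c$ and with $D_t$, $R_t$ the given drift and remainder parts of $Y_t$, formula (\ref{xi}) gives
\begin{equation*}
  Z_\delta := \xi_{\delta,W_\delta}^{\beta}[c+Y_t] = 0\cdot c + \frac{W_T}{T^{\beta}}\cdot D_\delta + \frac{1}{T^{\beta}}\cdot R_\delta = D_\delta^{T,W_T} + R_\delta^{T}.
\end{equation*}
Two points to record: the additive constant is annihilated (its coefficient is $0$), and $Z_\delta$ is mean ergodic by the very purpose of the operator, so the IEMO is applicable to it.

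Next I would apply (\ref{IEMO}) to $Z_\delta$. The IEMO identifies the drift-type summand of $Z_\delta$ as $D_t^{T,W_t} = \frac{W_T}{T^{\beta}}D_t$ (the same prefactor produced by the ergodic maker operator, now carried by the index $t$) and the remainder summand as $R_t^{T} = \frac{1}{T^{\beta}}R_t$, rescales them by $\frac{T^{\beta}}{W_T}$ and $T^{\beta}$ respectively, and reinstates a free additive constant, which I fix to be $c$. Since $\frac{T^{\beta}}{W_T}\cdot\frac{W_T}{T^{\beta}} = 1$ and $T^{\beta}\cdot\frac{1}{T^{\beta}} = 1$, and $W_T\neq 0$ almost surely for $T>0$, this yields
\begin{equation*}
  \xi_{t,W_t}^{-\beta}\big[Z_\delta\big] = c + \frac{T^{\beta}}{W_T}\cdot D_t^{T,W_t} + T^{\beta}\cdot R_t^{T} = c + D_t + R_t = c + Y_t,
\end{equation*}
which is the assertion.

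The only step that is more than mechanical is the treatment of the constant: because the ergodic maker operator forgets $c$, it cannot be recovered from $Z_\delta$ alone, and the identity holds precisely because the IEMO is defined with a free constant that we are entitled to set equal to $c$. The hypothesis $c>0$ is the normalization under which this choice is the natural one (in the applications $c$ plays the role of a logarithmic initial price). A related, purely notational caveat is to pin down that $D_t^{T,W_t}$ in (\ref{IEMO}) denotes exactly the factor $\frac{W_T}{T^{\beta}}D_t$ created in (\ref{xi}); once that is granted, all prefactors cancel and nothing further is needed.
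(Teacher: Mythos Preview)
Your proof is correct and follows essentially the same route as the paper's: apply the ergodic maker operator to $c+Y_t$ using (\ref{xi}), then apply the IEMO (\ref{IEMO}) and observe that the prefactors $\frac{T^{\beta}}{W_T}\cdot\frac{W_T}{T^{\beta}}$ and $T^{\beta}\cdot\frac{1}{T^{\beta}}$ cancel. Your additional remarks on the role of the free constant in the IEMO and on $W_T\neq 0$ a.s.\ are accurate commentary, but the underlying argument is the same direct composition as in the paper.
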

\begin{proof}
	We have
	\begin{align*}
		\xi_{\delta,W_\delta}^\beta[c+Y_t]= 0\cdot c+\frac{W_T}{T^{\beta}}D_\delta+\frac{1}{T^{\beta}}R_\delta.
	\end{align*}
	Now, from the definition \ref{IEMO} we have
	\begin{align*}
		\xi_{t,W_t}^{-\beta}[\frac{W_T}{T^{\beta}}D_\delta+\frac{1}{T^{\beta}}R_\delta]&=c+ \frac{T^{\beta}}{W_T}\frac{W_T}{T^{\beta}}D_t+T^\beta\frac{1}{T^{\beta}}R_t=c+D_t+R_t\\
		&=c+Y_t.
	\end{align*}
\end{proof}
\begin{proposition}
	For any finite mean ergodic process $Z_\delta$, the coefficient $c$ defined in \ref{IEMO} is unique.
\end{proposition}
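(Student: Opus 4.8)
The plan is to show that, once the process $Z_\delta$ is given, every term on the right-hand side of \eqref{IEMO} other than the additive constant is already forced, so that any two admissible choices of $c$ must coincide. Nothing deep is needed beyond the uniqueness of the drift--noise splitting that underlies the whole construction; finiteness and mean ergodicity of $Z_\delta$ are exactly the hypotheses that make this splitting available in the first place.

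First I would record that, because $Z_\delta$ is a finite mean ergodic process, it lies in the image of the ergodic maker operator and can be written as $Z_\delta=D_\delta^{T,W_T}+R_\delta^{T}$ with $D_\delta^{T,W_T}$ a drift (finite-variation) component and $R_\delta^{T}$ a noise ($W$-driven, martingale-type) component. I would then invoke the uniqueness part of the semimartingale decomposition: an adapted finite-variation process and a martingale that sum to zero must both vanish, so the pair $(D_\delta^{T,W_T},R_\delta^{T})$ --- hence $D_\delta$, $R_\delta$ and their time-reparametrisations $D_t$, $R_t$ --- is uniquely determined by $Z_\delta$. Consequently the processes $\frac{T^\beta}{W_T}\,D_t^{T,W_t}$ and $T^\beta R_t^{T}$ appearing in \eqref{IEMO} are themselves uniquely determined by $Z_\delta$; here one also uses that $W_T\neq 0$ almost surely, so that $T^\beta/W_T$ is a finite scalar rather than part of what must be decomposed.

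With that in hand, uniqueness of $c$ is a one-line argument: if $c_1$ and $c_2$ were both admissible in \eqref{IEMO} for this $Z_\delta$, then
\[
c_1+\frac{T^\beta}{W_T}\,D_t^{T,W_t}+T^\beta R_t^{T}=\xi_{t,W_t}^{-\beta}[Z_\delta]=c_2+\frac{T^\beta}{W_T}\,D_t^{T,W_t}+T^\beta R_t^{T}\qquad(t>0),
\]
and since the drift and noise terms coincide by the previous step, $c_1-c_2=0$. Equivalently, letting $t\downarrow 0$ and using that $D$ and $R$ start at the origin in Definition \ref{deferc} (so $D_t^{T,W_t}\to 0$ and $R_t^{T}\to 0$), one reads off $c=\lim_{t\downarrow 0}\xi_{t,W_t}^{-\beta}[Z_\delta]$, manifestly a single value; and as a consistency check Lemma \ref{key} confirms that $\xi^{-\beta}$ returns $c+Y_t$ from $\xi^{\beta}[c+Y_t]$, so the constant carried by the output is the one it was built with. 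The only non-routine point is the uniqueness of the drift/noise splitting of a finite mean ergodic $Z_\delta$ within this framework --- the step that makes the phrase ``the coefficient $c$'' well posed --- together with the almost-sure invertibility of $W_T$, which is what licenses treating $T^\beta/W_T$ as a fixed constant.
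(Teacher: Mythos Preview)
Your route differs from the paper's. The paper never appeals to a semimartingale decomposition; it simply observes that $\xi^\beta_{\delta,W_\delta}[c_1+Y_t]=\xi^\beta_{\delta,W_\delta}[c_2+Y_t]$ because the EMO annihilates constants, applies $\xi^{-\beta}$ to both sides, and reads off $c_1+Y_t=c_2+Y_t$ directly from Lemma~\ref{key}, whence $c_1=c_2$. You instead pin down the non-constant summands of \eqref{IEMO} by invoking uniqueness of the finite-variation/martingale splitting of $Z_\delta$, and treat Lemma~\ref{key} only as a consistency check. Your version is more structural and makes explicit why $W_T\neq 0$ a.s.\ matters, but it imports a decomposition theorem that the paper neither states nor needs; the paper's argument stays entirely inside the operator calculus it has already built.

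One shared subtlety is worth flagging: your displayed chain $c_1+\cdots=\xi_{t,W_t}^{-\beta}[Z_\delta]=c_2+\cdots$ presupposes that $\xi^{-\beta}[Z_\delta]$ is single-valued, which is precisely what uniqueness of $c$ is meant to guarantee (the Remark following the proposition says exactly this). The paper's proof has the analogous circularity when it applies one $\xi^{-\beta}$ to both sides while simultaneously invoking Lemma~\ref{key} with two different constants. So your proof is no weaker on this point, but you should be aware that the step is delicate in the same way.
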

\begin{proof}
	Assume that there exist numbers $c_1, c_2> 0$. From lemma \ref{key} for finite the processes $c_1+Y_t$ and $c_2+Y_t$ we have 
	\begin{equation}\label{a}
	\xi_{\delta,W_\delta}^\beta[c_1+Y_t]=\xi_{\delta,W_\delta}^\beta[c_2+Y_t].
	 \end{equation}
 The equality \ref{a} is true because the EMO drops the constants $c_1$ and $c_2$. Now, using \ref{IEMO} yields
	\begin{align*}
	\xi_{t,W_t}^{-\beta}\big[\xi_{\delta,W_\delta}^\beta[c_1+Y_t]\big]&=	\xi_{t,W_t}^{-\beta}\big[\xi_{\delta,W_\delta}^\beta[c_2+Y_t]\big]\\
	c_1+Y_t&=c_2+Y_t\\
	c_1&=c_2.			
	\end{align*}
\end{proof}
\begin{remark}
	Note that from uniqueness of the coefficient $c$ in the definition \ref{IEMO} implies that IEMO is well-defined.
\end{remark}
\begin{theorem}(Kac)\label{kac}
	Let $f$ be a measure preserving transformation, and $A\in\mathcal{F}$ such that $\mathbb{P}(A)>0$. Define $\rho_A(\omega)=\min\{n\in \mathbb{N}\lvert f^n(\omega)\in A\}$. Then, 
	$$\mathbb{E}[\rho_A(\omega)]=\frac{1}{\mathbb{P}(A)},\quad \forall \omega\in A.$$
\end{theorem}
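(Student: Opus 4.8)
The plan is to run the standard argument behind Kac's formula: realize $\Omega$, modulo null sets, as a Kakutani skyscraper built over the base $A$ whose column heights are given by the first return time, and then compute $\mathbb{P}(\Omega)=1$ column by column. I would begin with the reductions. Since $f$ is measure preserving and $\mathbb{P}(A)>0$, Poincaré recurrence makes $\rho_A(\omega)<\infty$ for $\mathbb{P}$-a.e. $\omega\in A$, so the return time is a.e. well defined on $A$; extending it to all of $\Omega$ via $\rho_A(\omega):=\min\{n\geq 1:f^n\omega\in A\}$, the same theorem together with the (here implicit) ergodicity hypothesis makes $\rho_A<\infty$ a.e. on $\Omega$ as well. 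Without ergodicity the conclusion must be weakened to $\int_A\rho_A\,d\mathbb{P}=\mathbb{P}\big(\bigcup_{n\geq 0}f^{-n}A\big)$, and I would flag this explicitly. Set $A_n:=\{\omega\in A:\rho_A(\omega)=n\}$ and $C_n:=\{\omega\in\Omega:\rho_A(\omega)=n\}$ for $n\geq 1$; these are measurable, the $A_n$ partition $A$ up to a null set, the $C_n$ partition $\Omega$ up to a null set, and $C_n\cap A=A_n$.

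The engine of the proof is a recursion obtained by pulling back along $f$. For $n\geq 2$, a point $\omega$ lies in $f^{-1}(C_{n-1})$ precisely when $\rho_A(f\omega)=n-1$; then either $f\omega\notin A$, whence $\rho_A(\omega)=n$ and $\omega\in C_n$, or $f\omega\in A$, whence $\rho_A(\omega)=1$ and $f\omega\in A_{n-1}$. This gives the disjoint decomposition $f^{-1}(C_{n-1})=C_n\sqcup f^{-1}(A_{n-1})$, and since $\mathbb{P}\circ f^{-1}=\mathbb{P}$, taking measures yields $\mathbb{P}(C_{n-1})=\mathbb{P}(C_n)+\mathbb{P}(A_{n-1})$, i.e. $\mathbb{P}(A_{n-1})=\mathbb{P}(C_{n-1})-\mathbb{P}(C_n)$; geometrically this says the column over $A_{n-1}$ has height $n-1$ and each of its $n-1$ floors carries mass $\mathbb{P}(A_{n-1})$. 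Also $\mathbb{P}(C_1)=\mathbb{P}(f^{-1}A)=\mathbb{P}(A)$.

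It remains to sum. By monotone convergence $\int_A\rho_A\,d\mathbb{P}=\sum_{n\geq 1}n\,\mathbb{P}(A_n)$, and a short Abel summation using $\mathbb{P}(A_n)=\mathbb{P}(C_n)-\mathbb{P}(C_{n+1})$ rewrites the partial sums as $\sum_{n=1}^{N}n\,\mathbb{P}(A_n)=\sum_{n=1}^{N}\mathbb{P}(C_n)-N\,\mathbb{P}(C_{N+1})$. Letting $N\to\infty$, the first term tends to $\sum_{n\geq 1}\mathbb{P}(C_n)=\mathbb{P}\big(\bigsqcup_{n}C_n\big)=1$ — this is where ergodicity enters — while the correction term satisfies $N\,\mathbb{P}(C_{N+1})=N\sum_{m>N}\mathbb{P}(A_m)\leq\sum_{m>N}m\,\mathbb{P}(A_m)$, the tail of the (a priori bounded by $1$, hence convergent) series $\sum_m m\,\mathbb{P}(A_m)$, so it tends to $0$. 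Hence $\int_A\rho_A\,d\mathbb{P}=1$, and dividing by $\mathbb{P}(A)$ gives $\mathbb{E}[\rho_A\mid A]=\tfrac{1}{\mathbb{P}(A)}$, which is the stated identity (the symbol $\mathbb{E}[\rho_A(\omega)]$ for $\omega\in A$ being read as this conditional mean over the base $A$).

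The real difficulty is only bookkeeping: making the "modulo null sets" precise throughout, and in particular the fact that $\bigsqcup_n C_n$ has full measure, which genuinely requires the ergodicity-type hypothesis and fails for an arbitrary measure-preserving $f$; and justifying $N\,\mathbb{P}(C_{N+1})\to 0$ in the limit. When $f$ is invertible — as for the irrational rotation used later in the paper — one can instead argue purely geometrically, writing the $n$-th column as $\bigsqcup_{k=0}^{n-1}f^{k}(A_n)$ with each floor of measure $\mathbb{P}(A_n)$ and reading $1=\mathbb{P}(\Omega)=\sum_{n\geq 1}n\,\mathbb{P}(A_n)$ directly; the preimage version above is what keeps the argument valid without invertibility.
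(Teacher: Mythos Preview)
Your argument is the classical proof of Kac's lemma via the Kakutani tower decomposition, and it is correct; you also rightly isolate the ergodicity hypothesis, which the statement in the paper leaves implicit but which is genuinely needed for $\bigsqcup_n C_n$ to have full measure (equivalently, for $\mathbb{P}\big(\bigcup_{n\ge 0}f^{-n}A\big)=1$). The paper itself does not give a proof of this theorem at all: it simply refers the reader to the cited ergodic-theory text. So there is nothing to compare at the level of technique---you have supplied what the paper outsources. If anything, your write-up is more careful than the paper's statement, since you make explicit that the identity should be read as $\mathbb{E}[\rho_A\mid A]=1/\mathbb{P}(A)$ and that without ergodicity one only gets $\int_A\rho_A\,d\mathbb{P}=\mathbb{P}\big(\bigcup_{n\ge 0}f^{-n}A\big)$.
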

\begin{proof}
	For the proof and more details we refer the reader to \cite{1}.
\end{proof}
\section{The Trading Model: Mean Reversion Timing for $Z_\delta$}\label{sec3}
This section is devoted to study the behavior of the process $Z_\delta$ defined in \ref{xi}. This process consistently reverts to its mean value ($Z_\delta=0$) along any given path \cite{116}. The mean value of $Z_\delta$ is referred to as the reference level. Here, we propose a model to estimate the time intervals during which $Z_\delta$ returns to its mean.
\subsection{The Setup}
Suppose that the price process of a risky asset, $X_t$, is a partially ergodic positive stochastic process \cite{116}. The first recurrence time $\tau_0$ of the associated $Z_\delta$ process to its mean, within a time interval of length $\delta_0$, is defined as:
\begin{align}
	X_t&=X_0e^{Y_t},\quad X_0=x, \quad Z_\delta=\xi_{\delta,W_\delta}^\beta[Y_t],\quad Z_0=0,\label{3-1}\\
	\tau_0&:= \inf \{t\in[0,s] \big\lvert \delta_0=s-t, Z_{\delta_0}=0\}, \quad s>0.\label{3-2}
\end{align}
For more information refer to \cite{1} and \cite{116}. We define the subsequent recurrence times to the reference level as $\tau_i$ for $i\in\{1,2,3,\cdots\}$ such that $\tau_i<\tau_{i+1}$. The difference between each pair of consecutive recurrence times is termed the sojourn time, denoted by $\delta_i$.

 Dividing the path of the $Z_\delta$ based on the reference level zero, we identify two parts: One above and one below the reference level. The set of sojourn times for these levels are denoted by $\phi^+$ and $\phi^-$, respectively:
\begin{align}
	\phi^+&:=\{\delta_i=\tau_{i+1}-\tau_i\big\lvert Z_{\delta_i}>0\},\\
	\phi^-&:=\{\delta_i=\tau_{i+1}-\tau_i\big\lvert Z_{\delta_i}<0\}, \quad \forall i\geq 0.
\end{align}
Hence, the mean sojourn time for the process $Z_\delta$ is defined as follows:
\begin{align*}
	\overline{\phi}^+(z,\delta)&:=\lim_{n\rightarrow \infty}\frac{1}{n}\sum_{k=1}^n\delta_k,\quad \delta_k\in\phi^+,\\
	\underline{\phi}^-(z,\delta)&:=\lim_{n\rightarrow \infty}\frac{1}{n}\sum_{k=1}^n\delta_k,\quad \delta_k\in \phi^-.
\end{align*}
The complete sets of recurrence times and sojourn times are therefore, respectively, defined as: .
\begin{align*}
	\boldsymbol{\tau}&=\{\tau_0,\tau_1,\tau_2,\cdots\},\\
	\boldsymbol{\delta}&=\phi^+\cup\phi^-=\{\delta_0,\delta_1,\delta_2,\cdots\},\quad \delta_i=\tau_{i+1}-\tau_i,\quad \forall i\geq0.
\end{align*}
For each time interval $[\tau_i,\tau_{i+1}]$, let $M_i=\max \lvert Z_{\delta_i}\rvert$.
Denote the time at which the process $\lvert Z_\delta\rvert$ reaches its maximum within this interval as $t_{M_i}$, referred to as the Order Execution Time (OET).

As the process $Z_\delta$ approaches the reference level zero, we initiate a trading position (long, if the process falls below the reference level, short if it rises above). We interpret the time $t_{M_i}$ as the time to exit the trade.   

We form a portfolio, consisting one long and one short positions on a stock. We define the profit of this trade by
\begin{equation}
	\mathcal{V}_t=l\sum_{i\geq 1}\boldsymbol{1}_{[\phi^-]}\lvert X_{\tau_i}-X_{t_{M_i}}\rvert+s\sum_{i\geq 1}\boldsymbol{1}_{[\phi^+]}\lvert X_{\tau_i}-X_{t_{M_i}}\rvert, \label{Vt}
\end{equation}
where  
\begin{equation*}
	\boldsymbol{1}_{[\phi^\cdot]}:=\begin{cases}
		1, &\text{if}\quad\phi^\cdot \neq \emptyset,\\
		0, &\text{if}\quad \phi^\cdot =\emptyset, 
	\end{cases}
\end{equation*}
and $l$ and $s$ are the long and short leverage coefficients, respectively.
\subsection{The Dynamics of Recurrence Times}
Consider the Itô Markov stochastic process
\begin{equation*}
	Y_t=Y_0+\int_0^t \sigma_sdW_s+\int_0^t\mu_sds, \quad Y_0=y.
\end{equation*}
Utilizing the ergodic maker operator (EMO), we get a mean ergodic process $Z_\delta$ as follows:
\begin{align}
	Y_t^\prime&=\ln(X_t)=Y_0^\prime+\int_0^t\sigma_sdW_s+\int_0^t\mu_s ds, \quad Y_0^\prime= y+\ln(x),\notag\\
	Z_\delta&=\xi_{\delta,W_\delta}^\beta[Y_t^\prime]=Z_0+\frac{1}{T^\beta}\int_0^\delta\sigma_sdW_s+\frac{W_T}{T^\beta}\int_0^\delta\mu_sds, \quad Z_0=0,
\end{align}
where $\mu_t$ and $\sigma_t$ are the drift and volatility coefficients, which are adapted integrable functions of $t$ and $X$, $W_t$ is a standard Wiener process, and $\beta$ is the inhibition degree parameter. 
\begin{theorem}\label{th1}
	Consider the time interval $[0,T]$ and let $X_t$ be a partially ergodic stochastic process. Then, the dynamics of the recurrence times process, $\{\tau_i\}_{i\geq0}$, is of the form:
	\begin{align*}
		d\tau_i&=-
		\frac{\big[\sigma_{\tau_i}+\int_0^{\tau_i} \mu_sds\big]}{\mu_{\tau_i}}\frac{dW_{\tau_i}}{W_{\tau_i}}.\\
		\tau_0&= \inf \{t\in[0,s] \big\lvert \delta_0=s-t, Z_{\delta_0}=0\}.
	\end{align*}
\end{theorem}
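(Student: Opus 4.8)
The plan is to regard each recurrence time $\tau_i$ as an \emph{implicit} solution of the equation $Z_{\cdot}=0$ and to differentiate this identity stochastically. Specialising the ergodic maker operator \eqref{xi} (see also \eqref{3-1}) to $Y_t'=\ln X_t$ and parametrising $Z$ by the running time, one has
\begin{equation*}
	Z_{t}=\frac{1}{T^{\beta}}\int_0^{t}\sigma_s\,dW_s+\frac{W_{t}}{T^{\beta}}\int_0^{t}\mu_s\,ds ,
\end{equation*}
and the $i$-th return to the reference level zero is characterised by $Z_{\tau_i}=0$, the first one being $\tau_0$ as in \eqref{3-2}. The idea is to show that pinning $\tau_i$ to this zero level forces a relation between $d\tau_i$ and $dW_{\tau_i}$.

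First I would compute the stochastic differential of the right-hand side above. The Itô integral contributes $T^{-\beta}\sigma_{t}\,dW_{t}$. For the second term I would apply the Itô product rule to $W_{t}\int_0^{t}\mu_s\,ds$; since $t\mapsto\int_0^{t}\mu_s\,ds$ is absolutely continuous, hence of bounded variation, its quadratic covariation with $W$ vanishes, and the term contributes $T^{-\beta}\big(\int_0^{t}\mu_s\,ds\big)dW_{t}+T^{-\beta}W_{t}\mu_{t}\,dt$. Collecting terms,
\begin{equation*}
	dZ_{t}=\frac{1}{T^{\beta}}\Big[\Big(\sigma_{t}+\int_0^{t}\mu_s\,ds\Big)dW_{t}+W_{t}\,\mu_{t}\,dt\Big].
\end{equation*}

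Next, I would impose that $\tau_i$ remains on the zero level, i.e. differentiate the identity $Z_{\tau_i}=0$, which forces $dZ_{\tau_i}=0$. Evaluating the previous display at $t=\tau_i$, the factor $T^{-\beta}$ cancels, and, using that $\mu_{\tau_i}\neq0$ and $W_{\tau_i}\neq0$ almost surely so that the division is legitimate, solving for $d\tau_i$ yields
\begin{equation*}
	d\tau_i=-\frac{\sigma_{\tau_i}+\int_0^{\tau_i}\mu_s\,ds}{\mu_{\tau_i}}\,\frac{dW_{\tau_i}}{W_{\tau_i}},
\end{equation*}
which is the asserted dynamics; the initial condition is precisely \eqref{3-2}.

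The delicate step is the justification of ``differentiating $Z_{\tau_i}=0$'': the identity holds only on the random, countable set $\boldsymbol{\tau}$ of recurrence times and not on a time interval, so $dZ_{\tau_i}=0$ must be read as an implicit--function statement — under an infinitesimal perturbation of the driving path the zero crossing $\tau_i$ shifts by exactly the amount that keeps $Z$ at the reference level, which is what setting the Itô differential above to zero encodes. Making this precise requires the transversality of the crossing (equivalently, the non-vanishing of the drift $\mu_{\tau_i}\neq0$ and of $W_{\tau_i}$, available from the partially ergodic, mean-reverting structure of $X_t$) in order to invoke a stochastic implicit/inverse-function argument, together with the bookkeeping that the Wiener increment entering the formula is the one evaluated at $\tau_i$. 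This is the main obstacle; once it is granted, the remainder is the short product-rule computation above.
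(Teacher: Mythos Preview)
Your argument is essentially the paper's own: compute the It\^o differential of $Z$, set it equal to zero at the recurrence time $\tau_i$, and solve for $d\tau_i$. The only visible difference is that the paper first identifies the running variable with the horizon $T$ itself and therefore also differentiates the prefactor $T^{-\beta}$, picking up an extra drift term $-\tfrac{\beta}{\delta}Z_\delta\,d\delta$ in $dZ_\delta$; since this term vanishes at the reference level $Z_{\tau_i}=0$, the two computations coincide and yours is simply a line shorter. Your explicit discussion of the implicit-function/transversality issue behind passing from $Z_{\tau_i}=0$ to $dZ_{\tau_i}=0$ (and the requisite non-vanishing of $\mu_{\tau_i}$ and $W_{\tau_i}$) supplies a layer of justification that the paper's proof leaves unstated.
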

\begin{proof}
	Consider the fixed path $\omega_0$. When $Z_\delta$ meets its mean along $\omega_0$, at any time interval of length $\delta_i$, we have
	\begin{align*}
		Z_{\delta_i}&=\frac{1}{T^\beta}\int_0^{\delta_i}\sigma_sdW_s+\frac{W_T}{T^\beta}\int_0^{\delta_i}\mu_sds=0
	\end{align*} 
	According to \cite{116} we know that the process $Z_\delta$ returns to its mean in the time interval $[0,T]$ once at least. Therefore, we can consider $\delta_i$ as $\delta_T=T-0=0$. Hence,
	\begin{align*}
		Z_{T}&=\frac{1}{T^\beta}\int_0^{T}\sigma_sdW_s+\frac{W_T}{T^\beta}\int_0^{T}\mu_sds=0.
	\end{align*}
	Using Itô lemma we have
	\begin{align*}
		dZ_{T}=&\big[\frac{-\beta}{T^{\beta+1}}\int_0^{T}\sigma_sdW_s-\frac{\beta W_{T}}{T^{\beta+1}}\int_0^{T}\mu_sds+\frac{\mu_{T}W_{T}}{T^{\beta}}\big]dT \notag\\
		&+\frac{1}{T^{\beta}}\big[\sigma_T+\int_0^T \mu_sds\big]dW_{T}\\
		=&\big[\frac{-\beta}{T}Z_T+\frac{\mu_T W_T}{T^{\beta}}\big]dT+
		\frac{1}{T^{\beta}}\big[\sigma_T+\int_0^T \mu_sds\big]dW_{T}=0.
	\end{align*}
	Since we considered $\delta=T-0=T$, we can write
	\begin{align*}
		dZ_\delta=\big[\frac{-\beta}{\delta}Z_\delta+\frac{\mu_\delta W_\delta}{\delta^{\beta}}\big]d\delta+
		\frac{1}{\delta^{\beta}}\big[\sigma_\delta+\int_0^\delta \mu_sds\big]dW_\delta=0.
	\end{align*}
	When $Z_\delta$ meets its mean, at the time $\tau_i$ in the time interval of length $\delta_i$, we have $Z_{\delta_i}=0$. Take $\delta_i=\tau_i-0=\tau_i$. Therefore, 
	\begin{align*}
		dZ_{\tau_i}=\big[\frac{\mu_{\tau_i} W_{\tau_i}}{\tau_i^{\beta}}\big]d\tau_i+
		\frac{1}{\tau_i^{\beta}}\big[\sigma_{\tau_i}+\int_0^{\tau_i} \mu_sds\big]dW_{\tau_i}=0.
	\end{align*}
	Hence,
	\begin{align*}
		&\big[\frac{\mu_{\tau_i} W_{\tau_i}}{\tau_i^{\beta}}\big]d\tau_i=-
		\frac{1}{\tau_i^{\beta}}\big[\sigma_{\tau_i}+\int_0^{\tau_i} \mu_sds\big]dW_{\tau_i}\\
		&\mu_{\tau_i} W_{\tau_i}d\tau_i=-
		\big[\sigma_{\tau_i}+\int_0^{\tau_i} \mu_sds\big]dW_{\tau_i}\\
		\Rightarrow\quad &d\tau_i=-
		\frac{\big[\sigma_{\tau_i}+\int_0^{\tau_i} \mu_sds\big]}{\mu_{\tau_i}}\frac{dW_{\tau_i}}{W_{\tau_i}}.
	\end{align*}
	Finally, from \ref{3-2} we observe that
	$\tau_0= \inf \{t\in[0,s] \big\lvert \delta_0=s-t, Z_{\delta_0}=0\}$.
\end{proof}
\subsubsection{An Example}
Consider a stock price process $S_t$ following the geometric Brownian motion:
\begin{align}
	S_t&=S_0\exp\{(\mu-\frac{1}{2}\sigma^2)t+\sigma W_t\},\quad S_0=s,\label{s}\\
	Y_t^\prime&=\ln(S_t)=Y_0^\prime+(\mu-\frac{1}{2}\sigma^2)t+\sigma W_t,\quad Y_0^\prime=\ln(s),\notag
\end{align}
where $\mu$ and $\sigma$ are constants, and $W_t$ is a standard Wiener process. Constructing the mean ergodic process $Z_\delta$ yields:
\begin{equation}\label{z}
	Z_\delta=Z_0+\frac{(\mu-\frac{1}{2}\sigma^2)\delta W_T}{T^{\beta}}+\frac{\sigma W_\delta}{T^{\beta}},\quad Z_0=0.
\end{equation}
Applying theorem \ref{th1}, we obtain:
\begin{align*}
	d\tau_i&=-
	\frac{\big[\sigma+\int_0^{\tau_i} (\mu-\frac{1}{2}\sigma^2)ds\big]}{\mu-\frac{1}{2}\sigma^2}\frac{dW_{\tau_i}}{W_{\tau_i}}.\\
	\tau_0&= \inf \{t>0 \big\lvert t\in [t,t+\delta_0] , Z_{\delta_0}=0\}.
\end{align*}
Thus,
\begin{align*}
	d\tau_i&=-
	\frac{\big[\sigma+ (\mu-\frac{1}{2}\sigma^2)\tau_i\big]}{\mu-\frac{1}{2}\sigma^2}\frac{dW_{\tau_i}}{W_{\tau_i}}.\\
	&=-\Big[\frac{\sigma}{\frac{1}{2}\sigma^2-\mu}+\tau_i\Big]\frac{dW_{\tau_i}}{W_{\tau_i}}.
\end{align*}
In the figures below, we display the stock price process (Equation \ref{s}) in figure \ref{fig1}. The process $Z_\delta$ is shown in figure \ref{fig2}, effectively indicates when to take a long or short position on the stock, as illustrated by the two arrows marking two example opportunities.
\begin{figure}[H]
	\begin{center}
		\includegraphics[scale=0.4]{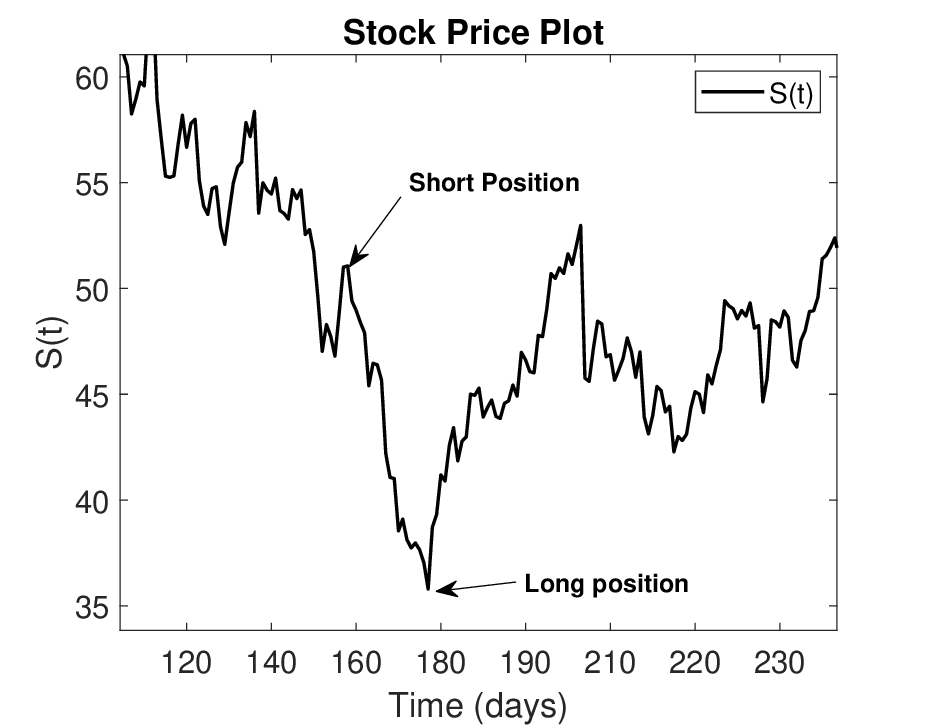}
		\caption{The plot of the stock price process $S_t$.}
		\label{fig1}
	\end{center}
\end{figure}
\begin{figure}[H]
	\begin{center}
		\includegraphics[scale=0.41]{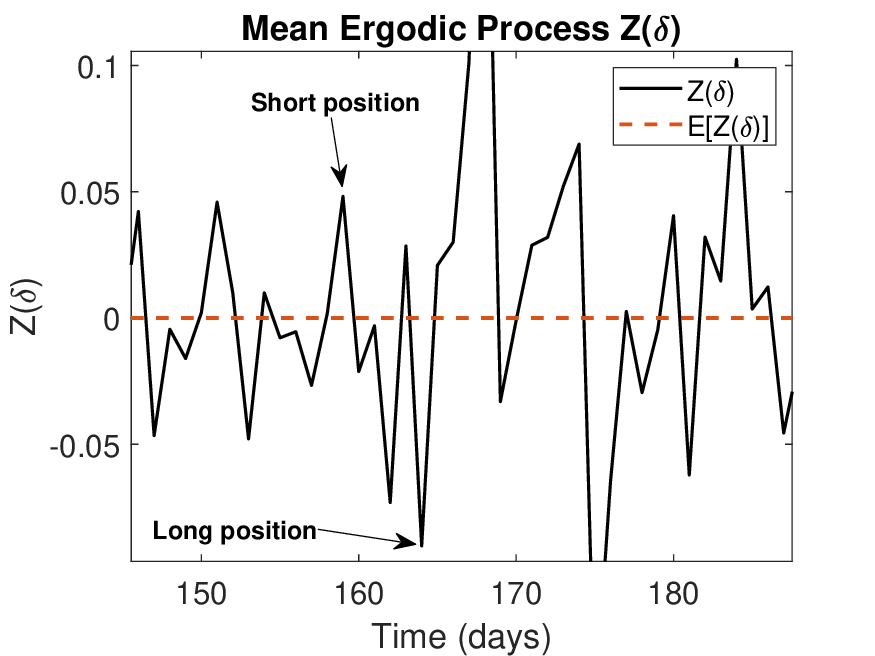}
		\caption{Corresponding mean ergodic $Z_\delta$ process of $S_t$.}
		\label{fig2}
	\end{center}
\end{figure}
Upon comparing the figures, it is apparent that there may be some discrepancies in the plot of $Z_\delta$. Therefore, while the model does not guarantee a flawless strategy, it does help in mitigating trading risk.

\subsection{Estimating $t_{M_i}$}

\begin{theorem}
	The order execution time $t_{M_i}$ satisfies the following relations
	$$\frac{\sigma_{t_{M_i}}}{\mu_{t_{M_i}}}<dt_{M_i}<d\tau_{i+1},\quad i=1,2,3,\cdots$$
\end{theorem}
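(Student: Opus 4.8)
The plan is to run the computation of Theorem~\ref{th1} again, but evaluated at the extremal time $t_{M_i}$ instead of at a recurrence time $\tau_i$. The starting point is the Itô expansion of $Z_\delta$ derived inside the proof of Theorem~\ref{th1},
\begin{equation*}
dZ_\delta=\Big[\frac{-\beta}{\delta}Z_\delta+\frac{\mu_\delta W_\delta}{\delta^{\beta}}\Big]d\delta+\frac{1}{\delta^{\beta}}\Big[\sigma_\delta+\int_0^\delta\mu_s\,ds\Big]dW_\delta .
\end{equation*}
Fix a path $\omega_0$. On the stretch between $\tau_i$ and $t_{M_i}$ the path of $|Z_\delta|$ is monotone and attains its interval maximum $M_i$ precisely at $\delta=t_{M_i}$, so the first-order condition $dZ_{t_{M_i}}=0$ holds there. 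First I would put $\delta=t_{M_i}$, substitute $Z_{t_{M_i}}=\pm M_i$, and solve the vanishing-differential relation for $dt_{M_i}$ in terms of $\tfrac{dW_{t_{M_i}}}{W_{t_{M_i}}}$, exactly as $d\tau_i$ was isolated in Theorem~\ref{th1}; this produces a coefficient that differs from the one in Theorem~\ref{th1} only by the extra mean-reversion term $-\tfrac{\beta}{t_{M_i}}Z_{t_{M_i}}$ in the denominator.

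For the lower bound $\sigma_{t_{M_i}}/\mu_{t_{M_i}}<dt_{M_i}$ I would estimate that coefficient. After cancelling the common factor $dW_{t_{M_i}}$ and recalling $\beta>\tfrac32$, $M_i>0$, and that $\int_0^{t_{M_i}}\mu_s\,ds$ has the same sign as the excursion (this is the drift component that drives $Z_\delta$ away from the reference level), the integral $\int_0^{t_{M_i}}\mu_s\,ds$ only enlarges the numerator relative to $\sigma_{t_{M_i}}$, while the mean-reversion term in the denominator only shrinks it; tracking these two monotone effects gives $dt_{M_i}>\sigma_{t_{M_i}}/\mu_{t_{M_i}}$.

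For the upper bound $dt_{M_i}<d\tau_{i+1}$ I would compare directly with the formula $d\tau_{i+1}=-\dfrac{\sigma_{\tau_{i+1}}+\int_0^{\tau_{i+1}}\mu_s\,ds}{\mu_{\tau_{i+1}}}\dfrac{dW_{\tau_{i+1}}}{W_{\tau_{i+1}}}$ supplied by Theorem~\ref{th1}. Since $\tau_i<t_{M_i}<\tau_{i+1}$ inside the partition $\boldsymbol{\tau}$, the accumulated drift $\int_0^{\cdot}\mu_s\,ds$ is strictly larger at $\tau_{i+1}$ than at $t_{M_i}$, and at $\tau_{i+1}$ the mean-reversion term vanishes because $Z_{\tau_{i+1}}$ is back at the reference level; both comparisons push $d\tau_{i+1}$ above $dt_{M_i}$, yielding the stated strict inequality.

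The main obstacle is legitimising the extremality step: $dZ_{t_{M_i}}=0$ is a pathwise statement about a semimartingale evaluated at a random time, so one must justify that the drift and diffusion pieces actually balance at $t_{M_i}$ — most cleanly by working on the monotone excursion $[\tau_i,t_{M_i}]$ and passing to the limit, or by using the representation of $Z_\delta$ from \cite{116} — and then one must pin down the sign of the accumulated-drift integrals $\int_0^{t_{M_i}}\mu_s\,ds$ and $\int_0^{\tau_{i+1}}\mu_s\,ds$; once those two points are granted, the remaining algebra is the same isolation-and-comparison already performed for $d\tau_i$.
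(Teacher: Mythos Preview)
Your approach diverges from the paper's at the very first step. The paper does \emph{not} invoke an extremality condition $dZ_{t_{M_i}}=0$; instead it uses only the sign of $Z_{t_{M_i}}$. Assuming the excursion is below the reference level, the paper writes out
\[
Z_{t_{M_i}}=\frac{1}{t_{M_i}^{\beta}}\int_0^{t_{M_i}}\sigma_s\,dW_s+\frac{W_{t_{M_i}}}{t_{M_i}^{\beta}}\int_0^{t_{M_i}}\mu_s\,ds\le 0,
\]
rearranges to $\int_0^{t_{M_i}}\sigma_s\,dW_s\big/\int_0^{t_{M_i}}\mu_s\,ds\le W_{t_{M_i}}$, then passes formally to differentials (and divides through by $dW_{t_{M_i}}$) to reach $dt_{M_i}\ge\sigma_{t_{M_i}}/\mu_{t_{M_i}}$. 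For the upper bound the paper is even more direct: from the ordering $\tau_i<t_{M_i}<\tau_{i+1}$ it gets $0<t_{M_i}-\tau_i<\delta_i$ and differentiates that chain to obtain $dt_{M_i}<d\tau_{i+1}$; no comparison with the explicit $d\tau_{i+1}$ formula from Theorem~\ref{th1} is made.

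The obstacle you flag is real and, in the paper's argument, is simply absent: a pathwise ``first-order condition'' $dZ_{t_{M_i}}=0$ at the running maximum of a Brownian-driven process is not available --- the local behaviour at a maximum of a diffusion does not yield a vanishing It\^o differential, and neither the monotone-excursion limit nor the representation in \cite{116} supplies one. So your plan rests on a step the paper never needs and that you have not justified. If you want to reconstruct the paper's proof, drop the extremality condition entirely and start from the inequality $Z_{t_{M_i}}\le 0$ (or $\ge 0$ for the other case); the rest is the same formal isolation of $dt_{M_i}$ you already describe, together with differentiating the chain $\tau_i<t_{M_i}<\tau_{i+1}$ for the upper bound.
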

\begin{proof}
	Consider a fixed path $\omega_0$. Assuming the path of $Z_\delta$ is below the reference level, within any given time interval $[\tau_i,\tau_{i+1}]$ of length $\delta_i$, the absolute value of $Z_\delta(\omega_0)$ lies within $[0,\lvert Z_{t_{M_i}}(\omega_0)\rvert]$ ($\delta_{M_i}=t_{M_i}-0$). Consequently, $Z_{t_{M_i}}(\omega_0)\leq 0$. Therefore,
	\begin{align}
		Z_{t_{M_i}}&=\frac{1}{{t_{M_i}}^\beta}\int_0^{t_{M_i}}\sigma_sdW_s+\frac{W_{t_{M_i}}}{{t_{M_i}}^\beta}\int_0^{t_{M_i}}\mu_sds\leq 0,\notag\\
		& \frac{1}{{t_{M_i}}^\beta}\int_0^{t_{M_i}}\sigma_sdW_s\leq -\frac{W_{t_{M_i}}}{{t_{M_i}}^\beta}\int_0^{t_{M_i}}\mu_sds,\notag\\
		&\int_0^{t_{M_i}}\sigma_sdW_s\leq W_{t_{M_i}}\int_0^{t_{M_i}}\mu_sds,\quad\Rightarrow \quad \frac{\int_0^{t_{M_i}}\sigma_sdW_s}{\int_0^{t_{M_i}}\mu_sds}\leq W_{t_{M_i}},\notag\\
		&\frac{\sigma_{t_{M_i}}dW_{t_{M_i}}}{\mu_{t_{M_i}}dt_{M_i}}\leq dW_{t_{M_i}},\quad \Rightarrow \quad dt_{M_i} \geq \frac{\sigma_{t_{M_i}}}{\mu_{t_{M_i}}}.\label{I}
	\end{align}
	Furthermore, considering $\delta_i=\tau_{i+1}-\tau_i$ 
	\begin{align}
		&\tau_i<t_{M_i}<\tau_{i+1},\quad \Rightarrow \quad 0<t_{M_i}-\tau_i<\tau_{i+1}-\tau_{i},\notag\\
		\Rightarrow& 0< t_{M_i}-\tau_i<\delta_i.\label{II}
	\end{align}
	Hence, from \ref{I}
	\begin{align*}
		&dt_{M_i} \geq \frac{\sigma_{t_{M_i}}}{\mu_{t_{M_i}}},\\
		&dt_{M_i}-d\tau_i\geq \frac{\sigma_{t_{M_i}}}{\mu_{t_{M_i}}}- d\tau_i.
	\end{align*}
	Using \ref{II} yields:
	\begin{align*}
		&\frac{\sigma_{t_{M_i}}}{\mu_{t_{M_i}}}- d\tau_i\leq dt_{M_i}-d\tau_i <d\delta_i=d\tau_{i+1}-d\tau_i\\
		&\frac{\sigma_{t_{M_i}}}{\mu_{t_{M_i}}}\leq dt_{M_i}<d\tau_{i+1}.
	\end{align*}
\end{proof}
\section{Irrational Rotation}\label{sec4}
%\begin{definition}
%	An irrational rotation is a map given by
%	\begin{equation}
%		R_{\theta}:[0,1]\rightarrow [0,1], \quad R_{\theta}=x+\theta \quad \mod 1,
%	\end{equation}
%	where $\theta$ is an irrational number.
%\end{definition}
The circle rotation can be visualized as a subdivision of a circle into two parts, which are then exchanged. When subdivided into more than two parts that are permuted with each other, it is called an interval exchange transformation \cite{1}.

There is a notation for the irrational rotation known as the multiplicative notation.
\begin{definition}(Irrational rotation)
	For the unit circle $\mathbb{S}^1$, let
	\begin{equation}
		R_\theta:\mathbb{S}^1\rightarrow \mathbb{S}^1,\quad R_\theta(x)=xe^{2\pi i \theta},\label{IR}
	\end{equation}
	where $\theta$ is an irrational number. We call the mapping \ref{IR} an irrational rotation on the unit circle.
\end{definition}
\subsection{Stochastic Process $\theta_t$}
We consider $\theta_t$ as a stochastic process dependent on the price process of a risky asset, $X_t$. For all $t\geq 0$:
$$0<\theta_t\neq \frac{p}{q}<n,\quad n\in \mathbb{N}\backslash \{\infty\}.$$
Therefore,
\begin{equation}\label{4-3}
	0<\theta_t^2<n^2\Rightarrow 0<\mathbb{E}[\theta_t^2]<n^2.
\end{equation}
For all $k\in \mathbb{N}$, the angles zero and $2k\pi$ overlap. However, we do not consider these angles to be the same.

Assume the price process of an asset follows \ref{3-1}. The European call option price with strike price $K$ and maturity $T$ is
$$C(t,X_t)=e^{-r(T-t)}\mathbb{E}^Q\big[\max[xe^{Y_t}-K,0]\big],$$
We have:
\begin{align*}
	\ln(xe^{Y_t}-K)&=\ln(xe^{Y_t})+\ln(1-\frac{K}{xe^{Y_t}})\\
	&=\ln(x)+Y_t+\ln(1-\frac{K}{xe^{Y_t}}).
\end{align*}	
Using the ergodic maker operator (EMO) yields:
\begin{align}
	\xi_{\delta,W_\delta}^\beta[\ln(xe^{Y_t}-K)]=Z_\delta+\xi_{\delta,W_\delta}^\beta\Big[\ln(1-\frac{K}{xe^{Y_t}})\Big]\label{xiln}.
\end{align}
\begin{lemma}
	The number, $\ln(1-\frac{K}{xe^{Y_t}})$ is irrational.
\end{lemma}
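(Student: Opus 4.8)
The plan is to read the statement in the only sense in which it can hold: for each fixed $t>0$ the random variable $\ln\big(1-\frac{K}{xe^{Y_t}}\big)$ is irrational $\mathbb{P}$-almost surely, and it is real-valued only on the in-the-money event $\{xe^{Y_t}>K\}$, to which I restrict throughout (this is also the only region in which the expression occurs inside \ref{xiln}). The proof is a countability-versus-absolute-continuity argument and uses no number theory beyond the trivial remark that $\ln a\in\mathbb{Q}$ forces $a$ to lie in the countable set $\{e^{q}:q\in\mathbb{Q}\}$; in particular the Lindemann--Weierstrass transcendence of $e^{q}$ for $q\in\mathbb{Q}\setminus\{0\}$, while true, is not needed.

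First I would make the ``bad set'' explicit. On $\{xe^{Y_t}>K\}$ we have $1-\frac{K}{xe^{Y_t}}\in(0,1)$, and $\ln\big(1-\frac{K}{xe^{Y_t}}\big)\in\mathbb{Q}$ holds iff $1-\frac{K}{xe^{Y_t}}=e^{q}$ for some $q\in\mathbb{Q}$ with $q<0$. Solving for $Y_t$,
\begin{equation*}
	Y_t=\ln\!\Big(\frac{K}{x\,(1-e^{q})}\Big)=:y_q,\qquad q\in\mathbb{Q}_{<0},
\end{equation*}
which is well defined since $q<0$ gives $1-e^{q}>0$. Hence the event $N:=\big\{\omega:\ln(1-\tfrac{K}{xe^{Y_t(\omega)}})\in\mathbb{Q}\big\}$ is contained in $\{\omega:Y_t(\omega)\in B\}$ with $B:=\{y_q:q\in\mathbb{Q}_{<0}\}$ a countable subset of $\mathbb{R}$. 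Next I would invoke the law of $Y_t$: writing $Y_t=Y_0'+\int_0^t\sigma_s\,dW_s+\int_0^t\mu_s\,ds$ as in \ref{3-1}, nondegeneracy of the volatility (for instance $\sigma_s\neq 0$; in the geometric Brownian motion example $Y_t$ is Gaussian with variance $\sigma^{2}t>0$) guarantees that $Y_t$ has an absolutely continuous distribution on $\mathbb{R}$, so $\mathbb{P}(Y_t\in B)=0$ for every Lebesgue-null — a fortiori every countable — Borel set $B$. Applying this to the set $B$ above yields $\mathbb{P}(N)=0$, i.e. $\ln\big(1-\frac{K}{xe^{Y_t}}\big)$ is irrational almost surely, which is exactly what the subsequent manipulation \ref{xiln} and the identification with the rotation angle $\theta_t$ require.

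The main obstacle is conceptual rather than computational: one must be explicit that the conclusion is an almost-sure statement about a random quantity and that it rests on the diffusion coefficient being nondegenerate — if $\sigma\equiv 0$ then $Y_t$ is deterministic and could coincide with some exceptional $y_q$. A secondary point deserving one sentence is the domain issue already noted: outside $\{xe^{Y_t}>K\}$ the logarithm is not real, so the lemma is to be understood (and is applied) on that event only. If a strictly pathwise, probability-free version is wanted, the same countability argument shows that the set of exceptional spot values $x$ for which the identity fails is at most countable; the cleanest and most useful formulation, however, is the almost-sure one above.
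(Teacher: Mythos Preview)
Your proof is correct and takes a genuinely different route from the paper. The paper argues via the Lindemann--Weierstrass theorem: if $q$ is a nonzero rational then $e^{q}$ is transcendental, hence $\ln a$ is irrational whenever $a\in(0,1)$ is algebraic. But the paper never checks---and in general there is no reason to expect---that $1-K/(xe^{Y_t})$ is algebraic at each sample point, so as a pathwise, deterministic assertion the lemma does not follow from Lindemann--Weierstrass alone. Your reading of the statement as an almost-sure one, together with the observation that rationality of the logarithm forces $Y_t$ into a countable set while the law of $Y_t$ is atomless under the nondegeneracy hypothesis $\sigma\not\equiv 0$, is both the correct interpretation and the correct argument; it is also shorter and makes explicit the hypothesis the paper uses only tacitly. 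The domain restriction to $\{xe^{Y_t}>K\}$ that you flag is likewise the paper's opening line.
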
	
\begin{proof}
	For a  European call option to be exercised, we need to have $xe^{Y_t}>K$. Hence,
	$0<1-K/xe^{Y_t}<1$. From the Lindemann-Weierstrass theorem \cite{100}, it follows that $e^a$ is non-algebraic, for every positive non-algebraic number $a$. Specifically, if $a$ is rational, the $e^a$ cannot be rational. Therefore, $\ln(1-K/xe^{Y_t})$ is an irrational number, according to \cite{99,100}. 
\end{proof}
Now since \ref{xiln} is irrational, we define the process $\theta(z,\delta)$ as follows:
\begin{definition}
	For $\delta>0$, the process
	\begin{equation}
		\theta(z,\delta)=Z_\delta+\frac{W_T}{T^{\beta}}\gamma_\delta,
	\end{equation}
	with $\gamma_\delta=\ln\big(1-\frac{K}{xe^{Y_\delta}}\big)$ is called the irrational angle process. We denote the irrational rotation generated using $\theta(z,\delta)$ by $R_{\theta_t}(\cdot)$.
\end{definition}
\begin{proposition}\label{prop1}
	The process $\theta(z,\delta)$ is a wide-sense stationary process.
\end{proposition}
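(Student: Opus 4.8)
The plan is to check the two defining requirements of a wide-sense stationary process directly: that the mean function $\delta\mapsto\mathbb{E}[\theta(z,\delta)]$ is constant, and that the autocovariance $\mathbf{Cov}\big(\theta(z,\delta_1),\theta(z,\delta_2)\big)$ depends on its two arguments only through the lag $\delta_2-\delta_1$. Finiteness of the second moment, which is also part of the definition, is immediate from \ref{4-3}, since $0<\theta_t<n$. The first move is to split $\theta(z,\delta)=Z_\delta+\frac{W_T}{T^{\beta}}\gamma_\delta$ and treat the two summands separately, recalling that $Z_\delta$ is the image of $Y'_t$ under the ergodic maker operator $\xi^{\beta}_{\delta,W_\delta}$ and that $\gamma_\delta=\ln\big(1-K/(xe^{Y_\delta})\big)$.

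For the first summand I would invoke the log-ergodicity that is built into the $\xi^{\beta}_{\delta,W_\delta}$ construction: by Definition \ref{logergodic} and the results of \cite{116}, $Z_\delta$ has constant (indeed zero) mean and an autocovariance depending only on the lag, hence is itself wide-sense stationary. For the second summand, $\frac{W_T}{T^{\beta}}$ is a fixed Gaussian scaling that does not vary with $\delta$, while $\gamma_\delta$ is a functional of the Brownian path on $[0,\delta]$ through $Y_\delta$; I would compute $\mathbb{E}\big[\frac{W_T}{T^{\beta}}\gamma_\delta\big]$ and $\mathbf{Cov}\big(\frac{W_T}{T^{\beta}}\gamma_{\delta_1},\frac{W_T}{T^{\beta}}\gamma_{\delta_2}\big)$ from the joint law of $\big(W_{\delta_1},W_{\delta_2},W_T\big)$ under $Q$, i.e. from the dynamics of $X_t$ in \ref{3-1}, using the bound $0<1-K/(xe^{Y_t})<1$ from the preceding lemma to justify integrability. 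Then I would add the two pieces: since a sum of jointly wide-sense stationary processes is again wide-sense stationary, it remains only to verify that the cross term $\mathbf{Cov}\big(Z_{\delta_1},\frac{W_T}{T^{\beta}}\gamma_{\delta_2}\big)$ depends on $\delta_2-\delta_1$ alone, and via the representation $Z_\delta=\frac{1}{T^{\beta}}\int_0^\delta\sigma_s\,dW_s+\frac{W_T}{T^{\beta}}\int_0^\delta\mu_s\,ds$ this reduces, through the It\^o isometry and the Brownian covariance kernel $s\wedge t$, to an elementary computation.

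The main obstacle is the second summand. The map $y\mapsto\ln\big(1-Ke^{-y}/x\big)$ is nonlinear, so $\gamma_\delta$ is not literally a stationary process, and one cannot expect its scaled mean and autocovariance to depend only on $\delta$ and on the lag without an extra hypothesis. I would resolve this either by linearising $\gamma_\delta$ about a reference level and controlling the remainder uniformly (the logarithm is bounded above by $0$ on the exercise region $xe^{Y_t}>K$, and $\theta$ itself is bounded by \ref{4-3}), or --- more in the spirit of the paper --- by appealing to the standing assumption that $X_t$, hence $Y_t$, is partially ergodic, so that the one- and two-dimensional distributions entering $\mathbb{E}[\gamma_\delta]$ and $\mathbf{Cov}(\gamma_{\delta_1},\gamma_{\delta_2})$ have already been rendered shift-invariant by the $\xi^{\beta}$ construction; this fits the fact that the argument is ultimately asymptotic in $T\to\infty$, the regime in which \ref{limer} is stated. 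I would make that assumption explicit and isolate it as the single non-routine ingredient, after which the Brownian bookkeeping for $Z_\delta$ and for the cross term is straightforward.
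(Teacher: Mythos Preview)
Your plan is considerably more elaborate than the paper's, and the two diverge at the treatment of $\gamma_\delta$. The paper handles $\gamma_\delta$ as if it were a deterministic function of $\delta$: in computing the mean it factors $\gamma_\delta$ outside the expectation to obtain $\mathbb{E}\big[\frac{W_T}{T^\beta}\gamma_\delta\big]=\frac{\mathbb{E}[W_T]\,\gamma_\delta}{T^\beta}=0$, and likewise in the variance it uses $\mathbb{E}[W_T^2\gamma_\delta^2]=\gamma_\delta^2\,\mathbb{E}[W_T^2]$. Finiteness of the second moment is then drawn from the It\^o integrability condition $\int_0^\delta(\sigma_s^2+|\mu_s|)\,ds<\infty$ rather than from \ref{4-3}. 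For the autocovariance the paper does no cross-term bookkeeping at all: it simply notes that $\mathbb{E}[\theta(z,\delta)]=\mathbb{E}[\theta(z,\delta+\epsilon)]=0$ and observes that, since $\theta(z,\delta)$ ``is dependent only on $\delta$'', its correlation function must be a function of $\delta$, and declares wide-sense stationarity. Thus the nonlinearity obstacle you carefully isolate is never confronted in the paper --- it is sidestepped by treating $\gamma_\delta$ as nonrandom, and the lag-dependence of the autocovariance is asserted rather than derived. Your decomposition into $Z_\delta$, the $\gamma_\delta$ term, and their cross covariance, together with the It\^o-isometry computation, is a genuinely different and more exacting route; what it buys is honesty about the joint law of $(W_{\delta_1},W_{\delta_2},W_T)$, at the cost of surfacing a difficulty the paper's argument simply does not engage with.
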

\begin{proof}
	First, we evaluate the expectation of the process.
	\begin{align*}
		&\mathbb{E}[\theta(z,\delta)]=\mathbb{E}[Z_\delta+\frac{W_T}{T^{\beta}}\gamma_\delta]=\mathbb{E}[Z_\delta]+\frac{\mathbb{E}[W_T]\gamma_\delta}{T^{\beta}}=0.
	\end{align*}
	Additionally, we have:
	\begin{align*}
		&\mathbb{V}ar[\theta(z,\delta)]=\mathbb{E}[\theta(z,\delta)^2]-\mathbb{E}[\theta(z,\delta)]^2=\mathbb{E}[Z_\delta^2+\frac{W_\delta^2}{\delta^{2\beta}}\gamma_\delta^2]\\
		=&\mathbb{E}[Z_\delta^2]+\frac{\gamma_\delta^2}{T^{2\beta-1}}
		=\frac{1}{T^{2\beta}}\int_0^\delta\sigma_s^2ds+\frac{1}{T^{2\beta-1}}(\int_0^\delta\mu_sds)^2+\frac{\gamma_\delta^2}{T^{2\beta-1}}\\
		\Rightarrow &\mathbb{V}ar[\theta(z,\delta)]	=\frac{1}{T^{2\beta}}\Big[\int_0^\delta\sigma_s^2ds+t\big[ (\int_0^\delta\mu_sds)^2+\gamma_\delta^2\big]\Big]
	\end{align*}
	Since $Y_t$ is an Itô process:
	$\int_0^\delta (\sigma_s^2+\lvert\mu_s\rvert)ds<\infty$. Hence, $\mathbb{E}[\theta^2(z,\delta)]<\infty$.\\
	For all $\delta,\epsilon>0$:
	$$\mathbb{E}[\theta(z,\delta)]=\mathbb{E}[\theta(z,\delta+\epsilon)]=0.$$
	Since the process $\theta(z,\delta)$ is dependent only on $\delta$, the correlation function of $\theta(z,\delta)$ is also a function of $\delta$. Therefore, $\theta(z,\delta)$ is wide-sense stationary.
\end{proof}
\begin{proposition}\label{prop2}
	The process $\theta(z,\delta)$ is mean ergodic.
\end{proposition}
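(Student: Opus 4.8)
The plan is to leverage Proposition \ref{prop1}: since $\theta(z,\delta)$ has already been shown to be wide-sense stationary with $\mathbb{E}[\theta(z,\delta)]=0$, mean ergodicity is equivalent to the $L^2$ (Slutsky-type) criterion of Definition \ref{logergodic}, namely
\[
\overline{<\theta>}:=\lim_{T\to\infty}\frac{1}{T}\int_0^T\Big(1-\frac{\tau}{T}\Big)\mathbf{Cov}_{\theta\theta}(\tau)\,d\tau=0,
\]
where $\mathbf{Cov}_{\theta\theta}$ denotes the autocovariance of $\theta(z,\delta)$ in the lag variable. So the entire proof reduces to controlling this averaged autocovariance as $T\to\infty$.

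For the main step I would avoid computing $\mathbf{Cov}_{\theta\theta}(\tau)$ in closed form and instead use the elementary fact that, for a wide-sense stationary process, Cauchy--Schwarz for the pairing $(U,V)\mapsto\mathbb{E}[UV]$ gives $|\mathbf{Cov}_{\theta\theta}(\tau)|\le\mathbf{Cov}_{\theta\theta}(0)=\mathbb{V}ar[\theta(z,\delta)]$ for every lag $\tau$. Since $\int_0^T(1-\tau/T)\,d\tau=T/2$, this yields the clean estimate $|\overline{<\theta>}|\le\tfrac12\,\mathbb{V}ar[\theta(z,\delta)]$, so it suffices to show the variance vanishes as $T\to\infty$. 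Here I would plug in the formula derived in the proof of Proposition \ref{prop1},
\[
\mathbb{V}ar[\theta(z,\delta)]=\frac{1}{T^{2\beta}}\Big[\int_0^\delta\sigma_s^2\,ds+t\big((\textstyle\int_0^\delta\mu_s\,ds)^2+\gamma_\delta^2\big)\Big],
\]
observe that the bracket is finite (because $Y_t$ is an It\^o process, so $\int_0^\delta(\sigma_s^2+|\mu_s|)\,ds<\infty$, and $\gamma_\delta=\ln(1-K/(xe^{Y_\delta}))$ is finite on the exercise region) and grows at most polynomially in the horizon (in fact like $O(T^3)$ for bounded coefficients), while $\beta>\tfrac32$ makes $T^{2\beta}$ dominate that growth. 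Hence $\mathbb{V}ar[\theta(z,\delta)]\to0$, so $\overline{<\theta>}=0$ and $\theta(z,\delta)$ is mean ergodic.

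The hard part will be the bookkeeping around the double role of the horizon $T$: it is simultaneously the upper limit of the ergodic average in \eqref{limer} and a parameter inside $\theta(z,\delta)$ through the factor $W_T/T^\beta$, so the polynomial-growth bound on the bracket must be made uniform over $\delta,\tau\in[0,T]$ rather than obtained by fixing $\delta$ first. If one prefers an explicit route, one can instead expand $\mathbf{Cov}_{\theta\theta}(\tau)=\mathbb{E}[\theta(z,\delta)\theta(z,\delta+\tau)]$ into the autocovariance of $Z_\delta$, the autocovariance of $(W_T/T^\beta)\gamma_\delta$, and the two cross terms, evaluate the stochastic integrals with the It\^o isometry together with $\mathbb{E}[W_sW_t]=\min(s,t)$ and $\mathbb{E}[W_T^2]=T$; every term then carries a prefactor $T^{-2\beta}$, at most one power of which is offset by a $W_T^2$, so each is $O(T^{-(2\beta-1)})$ uniformly in $\tau$, and the averaged integral is $O(T^{-(2\beta-1)})\to0$ precisely because $\beta>\tfrac32>\tfrac12$. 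Either way, the condition $\beta>3/2$ is exactly what makes the estimate go through.
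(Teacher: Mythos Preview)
Your argument is correct, but it takes a different route from the paper. The paper's proof is a two-line decomposition: it observes that $\theta(z,\delta)=Z_\delta+\frac{W_T}{T^\beta}\gamma_\delta$, invokes the cited reference \cite{116} for the mean ergodicity of each summand separately (the second summand is exactly $\xi_{\delta,W_\delta}^\beta[\gamma_\delta]$), and then appeals to the fact that a sum of mean-ergodic processes is mean ergodic. No covariance computation or use of $\beta>3/2$ appears explicitly.

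Your approach, by contrast, is self-contained: you bound the averaged autocovariance by $\tfrac12\mathbb{V}ar[\theta(z,\delta)]$ via Cauchy--Schwarz and then drive the variance to zero using the explicit formula from Proposition~\ref{prop1} and the growth condition $\beta>3/2$. This buys you an argument that does not lean on \cite{116} and, as you note, makes transparent exactly why the threshold $\beta>3/2$ is needed (the bracket grows like $T^3$ uniformly over $\delta\in[0,T]$, so $T^{-2\beta}$ must beat $T^3$). The paper's route is shorter but essentially outsources the analytic work; yours exposes the mechanism. Your caution about the double role of $T$ (as both the ergodic-average horizon and a parameter inside $\theta$) is well placed and is something the paper's proof simply does not address.
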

\begin{proof}
	Since $Z_\delta$ is mean ergodic and 
	$\xi_{\delta,W_\delta}^\beta[\gamma_\delta]$ is mean ergodic \cite{116}, from the properties of mean ergodic processes, $\theta(z,\delta)$ is mean ergodic.
\end{proof}
\subsection{Properties of the Irrational Rotation}
\begin{proposition}\label{4-8}
	If $R_{\theta}$ is an irrational rotation on the unit circle, with $\theta$ being an irrational number, then,
	\begin{itemize}
		\item [1.] The orbit of every $x\in[0,1]$ under $R_\theta$ is dense in the interval $[0,1]$.
		\item[2.] 
		$R_\theta$ is not topologically mixing. 
		\item[3.] 
		$R_{\theta}$ is uniquely ergodic, with the lebesgue measure as the unique invariant probability measure.
		\item[4.]
		Let $[a,b]\in[0,1]$. From the ergodicity of $R_\theta$ we have
		$$\lim_{N\rightarrow \infty}\sum_{n=0}^{N-1}\chi_{[a,b)}(R_\theta^n(x))=b-a,\quad x\in[a,b].$$
	\end{itemize} 
\end{proposition}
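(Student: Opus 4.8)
The four assertions are the classical structural facts about irrational rotations, and I would establish them in the order $1$, $3$, $4$, $2$: parts $3$ and $4$ carry the analytic content, while $1$ and $2$ fall out of elementary features of the map. Throughout I identify $\mathbb{S}^1$ with $\mathbb{R}/\mathbb{Z}$ and write $\lVert t\rVert$ for the distance from $t$ to the nearest integer. \emph{Part 1} I would prove by pigeonhole: since $\theta$ is irrational, the reductions modulo $1$ of $\theta,2\theta,\dots,(M+1)\theta$ are pairwise distinct for $M=\lceil 1/\varepsilon\rceil$, so two of them, coming from $i\theta$ and $j\theta$ with $i<j$, lie within $\varepsilon$; then $m:=j-i\geq 1$ satisfies $0<\lVert m\theta\rVert<\varepsilon$, and the successive multiples $m\theta,2m\theta,\dots$ reduced modulo $1$ form an $\varepsilon$-net of $[0,1)$. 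Translating by $x$ shows the forward orbit of $x$ is $\varepsilon$-dense, and as $\varepsilon>0$ was arbitrary the orbit is dense.

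For \emph{part 3} the plan is to use harmonic analysis on the circle. Since $R_\theta$ is a translation, Lebesgue measure $\lambda$ is manifestly invariant, so it suffices to show it is the only invariant Borel probability measure. For a character $e_k(x)=e^{2\pi i kx}$ with $k\neq 0$, summing a geometric series gives
\begin{equation*}
	\frac{1}{N}\sum_{n=0}^{N-1}e_k\big(R_\theta^{n}(x)\big)=e_k(x)\cdot\frac{1}{N}\cdot\frac{e^{2\pi i kN\theta}-1}{e^{2\pi i k\theta}-1},
\end{equation*}
whose modulus is at most $\frac{2}{N\,\lvert e^{2\pi i k\theta}-1\rvert}\to 0$ uniformly in $x$, using $e^{2\pi i k\theta}\neq 1$; for $k=0$ the average is identically $1=\int e_0\,d\lambda$. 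Since trigonometric polynomials are dense in $C(\mathbb{S}^1)$ by Stone--Weierstrass, it follows that $\frac{1}{N}\sum_{n=0}^{N-1}f\big(R_\theta^{n}(x)\big)\to\int f\,d\lambda$ uniformly in $x$ for every $f\in C(\mathbb{S}^1)$. If now $\mu$ is any invariant probability measure, integrating this relation against $\mu$ and using invariance of $\mu$ yields $\int f\,d\mu=\int f\,d\lambda$ for all continuous $f$, hence $\mu=\lambda$ by the Riesz representation theorem. This is precisely unique ergodicity, with $\lambda$ the unique invariant probability.

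\emph{Part 4} then follows from part 3 by the standard sandwiching argument: given $[a,b)\subset[0,1)$ and $\varepsilon>0$, choose continuous $g_-\leq\chi_{[a,b)}\leq g_+$ with $\int(g_+-g_-)\,d\lambda<\varepsilon$, apply the uniform convergence of part 3 to $g_-$ and $g_+$, and squeeze; letting $\varepsilon\downarrow 0$ gives $\frac{1}{N}\sum_{n=0}^{N-1}\chi_{[a,b)}\big(R_\theta^{n}(x)\big)\to b-a$ for every $x$. For \emph{part 2}, observe that $R_\theta$ is an isometry of $\mathbb{S}^1$, so each $R_\theta^{n}$ carries an arc to an arc of the same length; taking $U=V$ to be an open arc of length $\tfrac14$ with center $c$, the sets $R_\theta^{n}(U)$ and $U$ are disjoint exactly when $\lVert n\theta\rVert>\tfrac14$, i.e.\ when $n\theta$ reduced modulo $1$ lies in an arc of length $\tfrac12$, and by part 4 this set of indices $n$ has density $\tfrac12$, hence is infinite. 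Therefore there is no $N$ with $R_\theta^{n}(U)\cap V\neq\emptyset$ for all $n\geq N$, so $R_\theta$ is not topologically mixing.

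The step I expect to be the genuine obstacle is part 3 — specifically the two passages to the limit it conceals. One must verify that the uniform-in-$x$ convergence of the ergodic averages, obtained directly only on the characters through the explicit geometric-series bound, really does extend to all of $C(\mathbb{S}^1)$ via the density of trigonometric polynomials, and then that this limit may be interchanged with integration against an arbitrary invariant measure $\mu$; both steps rely on the uniformity in $x$, which is exactly why one computes the character averages by hand rather than merely quoting a mean ergodic theorem. Once part 3 is in place, part 4 and part 2 are routine, and part 1 is self-contained.
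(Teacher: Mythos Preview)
Your argument is correct and entirely standard: the pigeonhole density argument for part~1, the character computation plus Stone--Weierstrass for unique ergodicity in part~3, the continuous sandwich for part~4, and the isometry/density argument for part~2 are all sound, and you have correctly identified that the uniformity in $x$ of the character averages is what drives the passage from trigonometric polynomials to all of $C(\mathbb{S}^1)$ and then to integration against an arbitrary invariant $\mu$.

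As for comparison with the paper: there is nothing to compare. The paper does not prove this proposition at all; its entire proof reads ``For the proof and more details we refer the reader to \cite{1}.'' So your proposal is strictly more than what the paper supplies. One small remark: the statement of part~4 in the paper omits the normalizing factor $\tfrac{1}{N}$ in front of the sum (as written, the limit would be $+\infty$ whenever $b>a$); you have silently and correctly read it as an average, which is the only sensible interpretation.
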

\begin{proof}
	For the proof and more details we refer the reader to \cite{1}.
\end{proof}
\begin{theorem}
	The process $R_{\theta_t}$ is Markov.
\end{theorem}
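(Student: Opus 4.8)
The plan is to show that the family $R_{\theta_t}$ inherits the Markov property from the driving process $Z_\delta$ (equivalently from the Wiener process $W_t$) together with the deterministic nature of the rotation map $R_\theta(x)=xe^{2\pi i\theta}$. The key observation is that, for a fixed starting point $x\in\mathbb{S}^1$, the position $R_{\theta_t}(x)=x\exp\{2\pi i\,\theta(z,\delta)\}$ is a measurable function of the single random variable $\theta(z,\delta)$, and $\theta(z,\delta)=Z_\delta+\frac{W_T}{T^\beta}\gamma_\delta$ is built from $Z_\delta$ (an Itô process in $\delta$) and $\gamma_\delta=\ln(1-K/xe^{Y_\delta})$, which is itself a measurable function of $Y_\delta$. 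So the whole object is a deterministic, time-dependent transformation of the state variable $Y_\delta$ of an Itô diffusion, and Itô diffusions are Markov.

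First I would set up the filtration $(\mathcal{F}_t)$ generated by $W_t$ and note that $Y_t'=\ln(X_t)=Y_0'+\int_0^t\sigma_s\,dW_s+\int_0^t\mu_s\,ds$ is a Markov process with respect to $(\mathcal{F}_t)$, since $\sigma_s,\mu_s$ are adapted functions of $s$ and $X_s$ (this is the standing assumption from the Itô-Markov setup in Section~\ref{sec3}). Next I would write $R_{\theta_t}(x)$ explicitly as $F(t,Y_t')$ for the deterministic map $F(t,y)=x\exp\{2\pi i(Z_\delta(y,W)+\frac{W_T}{T^\beta}\ln(1-Ke^{-y}/x))\}$, being careful about the roles of $\delta$, $T$ and $W_T$; the point is that conditionally on $\mathcal{F}_s$ the evolution of $R_{\theta_t}$ for $t>s$ depends on the past only through $(Y_s',W_s)$, hence only through the current state. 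Then, for a bounded measurable test function $g$ on $\mathbb{S}^1$, I would verify
\begin{equation*}
	\mathbb{E}\big[g(R_{\theta_t})\,\big|\,\mathcal{F}_s\big]=\mathbb{E}\big[g(R_{\theta_t})\,\big|\,R_{\theta_s}\big],\qquad 0\le s\le t,
\end{equation*}
by conditioning on $Y_s'$, applying the Markov property of the Itô process, and observing that the map carrying $Y_s'$ to $R_{\theta_s}$ is invertible on the exercise region $xe^{Y_t}>K$ (so that $\sigma(R_{\theta_s})=\sigma(Y_s')$ there), which lets the conditioning set be rewritten in terms of $R_{\theta_s}$ alone.

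The main obstacle I expect is precisely the measurability/invertibility issue in that last step: $R_\theta$ wraps the real line onto the circle, so $\theta\mapsto e^{2\pi i\theta}$ is many-to-one, and one must argue that the extra randomness introduced by the wrapping does not destroy the Markov property — either by working on the universal cover (tracking the lifted angle $\theta(z,\delta)$ rather than its image), or by checking that the conditional law of $R_{\theta_t}$ given $R_{\theta_s}$ is well-defined because $\theta(z,\delta)$ has a density and the fibers of the covering map are countable. A secondary technical point is the dependence of $\gamma_\delta$ on the fixed base point $x$ and on $Y_\delta$ through $\ln(1-K/xe^{Y_\delta})$, which should be handled by restricting attention to the in-the-money region where this is well-defined and smooth, exactly as in the preceding lemma. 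Once these measurability matters are dispatched, the Markov property follows immediately from that of the underlying diffusion, so the proof is short modulo the covering-space bookkeeping.
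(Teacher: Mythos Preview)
Your route is genuinely different from the paper's. The paper does not argue via ``functional of an underlying Markov (It\^o) process'' at all; instead it asserts that the increments of $R_{\theta_t}$ are independent. Concretely, the paper first observes $R_{\theta_t}(0)=0$, then claims that because the irrational rotation is ergodic with dense orbits, the angles $\theta_s$ and $\theta_t$ (hence $R_{\theta_s}$ and $R_{\theta_t}$) are independent for all $s,t$, and finally runs the standard ``independent increments $\Rightarrow$ Markov'' computation
\[
\mathbb{P}(R_{\theta_t}\le x\mid R_{\theta_{t_1}},\dots,R_{\theta_{t_n}})
=\mathbb{P}(R_{\theta_t}-R_{\theta_{t_n}}+R_{\theta_{t_n}}\le x\mid R_{\theta_{t_n}}).
\]
So the paper's key lemma is an independence claim derived from the ergodic/density properties of the deterministic map $R_\theta$, not from the It\^o structure of $Y_t'$.

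By contrast, your plan pushes the Markov property of the It\^o diffusion $Y_t'$ forward through the deterministic map $y\mapsto x\exp\{2\pi i\,\theta(z,\delta)\}$ and confronts the covering-space/invertibility issue on $\mathbb{S}^1$. What this buys you is a more conventional probabilistic argument that does not need any link between ergodicity of a rotation and stochastic independence of the random angles; the price is exactly the bookkeeping you flag (the wrapping $\theta\mapsto e^{2\pi i\theta}$ and the role of $W_T$ as a fixed-horizon quantity). The paper's approach is shorter and sidesteps the invertibility issue entirely, but it rests on the step ``ergodic with dense orbits $\Rightarrow$ $\theta_s,\theta_t$ independent,'' which is not the standard meaning of ergodicity and is invoked without further justification; your argument trades that step for a measurability check that you have already correctly identified as the crux.
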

\begin{proof}
	It suffices to prove the increments of $R_{\theta_t}$ are independent. We have
	$$ R_{\theta_t}(0)=0\cdot e^{2\pi i \theta_t}=0, \quad \forall t>0.$$
	According to the proposition \ref{4-8}, since the irrational rotation is ergodic and the orbit of every point is dense, it follows that for the time $T$ and every $s,t\geq0$, the angles $\theta_t$ and $\theta_s$ are independent from each other. Therefore, $R_{\theta_s}$ and $R_{\theta_t}$ are independent. Hence,
	\begin{align*}
		&\mathbb{P}(R_{\theta_t}\leq x\lvert R_{\theta_{t_1}},\cdots,R_{\theta_{t_n}})\\
		=&\mathbb{P}(R_{\theta_t}-R_{\theta_{t_n}}+R_{\theta_{t_n}}\leq x\lvert R_{\theta_{t_1}},\cdots,R_{\theta_{t_n}})\\
		=&\mathbb{P}(R_{\theta_t}-R_{\theta_{t_n}}+R_{\theta_{t_n}}\leq x\lvert R_{\theta_{t_n}})=\mathbb{P}(R_{\theta_t}\leq x\lvert R_{\theta_{t_n}})
	\end{align*}
\end{proof}
We express the following theorem by using \cite{101} to define the function $\phi$, which is used in the Birkhoff ergodic theorem.
\begin{theorem}\label{th4-9}
	Let $R_{\theta_t}$ be an irrational rotation process and $\phi:[0,1]\rightarrow \mathbb{R}$ be a continuous function, such that $\phi(0)=\phi(1)$. Then,
	$$\lim_{n\rightarrow \infty}\Big(\frac{1}{n}\sum_{k=0}^{n-1}\phi(R_{\theta_t}^k(x))\Big)=\int_0^1\phi(y)dy,\quad \forall x\in [0,1].$$
\end{theorem}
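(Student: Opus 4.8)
The plan is to reduce the statement to the equidistribution facts already recorded in Proposition~\ref{4-8}. The hypothesis $\phi(0)=\phi(1)$ is exactly what lets us regard $\phi$ as a continuous function on the circle $\mathbb{S}^1=[0,1]/(0\sim1)$, so that iterating the rotation never produces a spurious jump at the identification point; for each fixed $t$ the angle $\theta_t$ is a genuine irrational number by the construction preceding the statement, hence $R_{\theta_t}$ is an honest irrational rotation to which all four parts of Proposition~\ref{4-8} apply.

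First I would establish the claim for indicator functions of subintervals, which is precisely item~4 of Proposition~\ref{4-8}: $\frac1n\sum_{k=0}^{n-1}\chi_{[a,b)}(R_{\theta_t}^k(x))\to b-a=\int_0^1\chi_{[a,b)}(y)\,dy$. By linearity the same convergence holds, with limit $\int_0^1\psi$, for every step function $\psi=\sum_{j=1}^N c_j\chi_{I_j}$ built from finitely many subintervals $I_j\subset[0,1]$.

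Next I would pass to a general continuous $\phi$ with $\phi(0)=\phi(1)$ by a three-$\varepsilon$ comparison. Given $\varepsilon>0$, uniform continuity of $\phi$ furnishes a step function $\psi$ (chosen compatibly with the identification $0\sim1$) with $\|\phi-\psi\|_\infty<\varepsilon$, and then
\begin{align*}
\Bigl|\tfrac1n\sum_{k=0}^{n-1}\phi\bigl(R_{\theta_t}^k(x)\bigr)-\int_0^1\phi\Bigr|
&\le \tfrac1n\sum_{k=0}^{n-1}\bigl|\phi-\psi\bigr|\bigl(R_{\theta_t}^k(x)\bigr)
+\Bigl|\tfrac1n\sum_{k=0}^{n-1}\psi\bigl(R_{\theta_t}^k(x)\bigr)-\int_0^1\psi\Bigr|
+\Bigl|\int_0^1(\psi-\phi)\Bigr|\\
&\le 2\varepsilon+\Bigl|\tfrac1n\sum_{k=0}^{n-1}\psi\bigl(R_{\theta_t}^k(x)\bigr)-\int_0^1\psi\Bigr|,
\end{align*}
where the last term tends to $0$ by the previous step; letting $n\to\infty$ and then $\varepsilon\to0$ finishes the argument. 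Two shorter variants are worth noting: one may simply invoke unique ergodicity (item~3 of Proposition~\ref{4-8}), since for a uniquely ergodic map the Birkhoff averages of every continuous function converge uniformly to the integral against the unique invariant measure, here Lebesgue measure, see \cite{1}; or one may follow Weyl's route, checking the limit by summing a geometric series for the characters $y\mapsto e^{2\pi i m y}$ ($m\neq0$), where $\frac{e^{2\pi i m n\theta_t}-1}{n(e^{2\pi i m\theta_t}-1)}\to0$ because $m\theta_t\notin\mathbb{Z}$, noting the $m=0$ case is trivial, and extending to all of $C(\mathbb{S}^1)$ by density of trigonometric polynomials with the same three-$\varepsilon$ estimate.

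The only genuine point of care — and the step I expect to be the main obstacle — is the bookkeeping at the seam $0\sim1$: one must make sure the approximating step functions are themselves well-defined on the circle (or that the error near the identification point is controlled uniformly in $n$), which is exactly where the hypothesis $\phi(0)=\phi(1)$ is used. Everything else is the standard reduction of Weyl equidistribution to the interval case already proved in Proposition~\ref{4-8}.
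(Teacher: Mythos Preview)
Your argument is correct. The paper, however, takes precisely the Weyl route you mention only as a ``shorter variant'': it checks the claim directly for the characters $\psi_m(x)=e^{2\pi i m x}$ by summing the geometric series $\frac{1}{n}\sum_{k=0}^{n-1}e^{2\pi i m k\theta_t}$, observes that this tends to $0$ for $m\neq 0$ while the $m=0$ term contributes the constant Fourier coefficient $\int_0^1\phi$, and then appeals to density of trigonometric polynomials in $C(\mathbb{S}^1)$. Your primary proof instead builds up from indicators of subintervals via item~4 of Proposition~\ref{4-8} and a step-function approximation. Both are standard and equally valid; your route is slightly less self-contained because it leans on item~4 (itself a nontrivial equidistribution statement quoted from \cite{1}), whereas the paper's character computation is an explicit calculation requiring nothing beyond the geometric-series formula and the Weierstrass-type density statement. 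Conversely, your approach avoids complex exponentials entirely and makes the role of the hypothesis $\phi(0)=\phi(1)$ more transparent at the approximation step.
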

\begin{proof}
	From \cite{101} we define the function $\psi_m$ as
	$$\psi_m(x)=e^{2\pi i mx}=\cos(2\pi i m x)+i\sin(2\pi i mx).$$
	We have
	$$	\psi_m(R_{\theta_t}^k(x))=e^{2\pi i k\theta_t+x}=e^{2\pi i m k \theta_t}\psi_m(x).$$
	For $m\neq 0$ we write
	\begin{align*}
		\Big\lvert\frac{1}{n}\sum_{k=0}^{n-1}\psi_m(R_{\theta_t}^k(x))\Big\rvert&=\frac{1}{n}\cdot \lvert e^{2\pi i mx}\rvert\cdot \Big\lvert\sum_{k=0}^{n-1}e^{2\pi i m k \theta_t}\Big\rvert\\
		&=\frac{1}{n}\Big\lvert\frac{1-e^{2\pi i n m \theta_t}}{1-e^{2\pi i m \theta_t}}\Big\rvert\leq\frac{1}{n}\cdot \frac{2}{1-e^{2\pi i m \theta_t}}\xrightarrow[n\rightarrow \infty]{} 0.
	\end{align*}
	Therefore, if we take $\phi(x)=\sum_{m=-t_N}^{t_N}\theta_m\psi_m(x)$, in which\\ $\theta_{-t_N},\theta_{-t_{N+1}},\cdots, \theta_{t_0} ,\cdots, \theta_{t_N}\in \mathbb{Q}^{\complement}$, Then,
	\begin{equation*}
		\lim_{n\rightarrow \infty}\frac{1}{n}\sum_{k=0}^{n-1}\phi(R_{\theta_t}^k(x))=\theta_{t_0}=\int\phi(y)dy.
	\end{equation*}
	Finally, from \cite{1} we know that the set of trigonometric polynomials is dense in the space of all periodic functions, which completes the proof.
\end{proof}
\begin{remark}
	In theorem \ref{th4-9} the angle $\theta_{-i}$, for $i>0$, indicates clockwise direction on the unit circle. Also $t_0$ indicates the time that we buy an option contract. Therefore, the angle $\theta_0=\theta_{t_0}$ is not zero and it varies with respect to $Z_{\delta_0}$. Hence, $\theta_{t_0}$ is a random irrational number. 
\end{remark}
\begin{theorem}
	Let $\theta(z,\delta)$ be a random angle process with respect to irrational rotation $R_{\theta_t}$ on $\mathbb{S}^1$ such that $\mathbb{E}[\theta_t]<\infty$. Then, $R_{\theta_t}=xe^{2\pi i \theta_t}$ is a partially ergodic process.
\end{theorem}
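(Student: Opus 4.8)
The plan is to read off partial ergodicity of $R_{\theta_t}$ from three structural facts already in place: that it is a Markov process, that its driving angle $\theta(z,\delta)$ is wide-sense stationary and mean ergodic, and that for each frozen sample path $R_{\theta_t}$ is a genuine irrational rotation, hence uniquely ergodic. Concretely, I would recall from \cite{116} that a process of this kind counts as partially ergodic once (i) it is Markov, (ii) the ergodic maker operator turns its logarithmic structure into a mean ergodic process, and (iii) the resulting dynamical system obeys a pathwise ergodic average. Property (i) is the Markov property of $R_{\theta_t}$ established above, so the work is in (ii) and (iii).

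For (ii), write $\ln R_{\theta_t}=\ln x+2\pi i\,\theta(z,\delta)$ and insert the decomposition $\theta(z,\delta)=Z_\delta+\frac{W_T}{T^{\beta}}\gamma_\delta$, $\gamma_\delta=\ln\!\big(1-\tfrac{K}{xe^{Y_\delta}}\big)$, coming from the definition of the irrational angle process. The ergodic maker operator annihilates the constant $\ln x$, and what remains is $2\pi i\,\theta(z,\delta)$, which is mean ergodic by Proposition \ref{prop2} (resting in turn on the mean ergodicity of $Z_\delta$ and of $\xi_{\delta,W_\delta}^{\beta}[\gamma_\delta]$ from \cite{116}) and wide-sense stationary by Proposition \ref{prop1}; the hypothesis $\mathbb{E}[\theta_t]<\infty$, complemented by the finite-variance computation inside Proposition \ref{prop1}, supplies the integrability these statements need. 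Thus the logarithmic structure of $R_{\theta_t}$ is mean ergodic, which gives the mean-ergodic component the definition calls for.

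For (iii), I would fix $\omega$. By the lemma asserting that $\ln(1-K/xe^{Y_t})$ is irrational together with \ref{4-3}, the angle $\theta_t(\omega)$ is irrational off a $\mathbb{P}$-null set, so $x\mapsto xe^{2\pi i\theta_t(\omega)}$ is an honest irrational rotation on $\mathbb{S}^1$; Proposition \ref{4-8}(3) makes it uniquely ergodic with Lebesgue measure as its only invariant probability measure, and Proposition \ref{4-8}(4) with Theorem \ref{th4-9} gives, for every continuous $\phi:[0,1]\to\mathbb{R}$ with $\phi(0)=\phi(1)$,
\[
\lim_{n\to\infty}\frac1n\sum_{k=0}^{n-1}\phi\big(R_{\theta_t(\omega)}^{k}(x)\big)=\int_0^1\phi(y)\,dy .
\]
Since $\phi$ is bounded on the compact circle, dominated convergence lets me take expectations and obtain the same identity for the process $R_{\theta_t}$, i.e.\ the long-run time average of $\phi\circ R_{\theta_t}$ agrees in mean with $\int_0^1\phi\,dy$ — exactly the substitution of time average for expectation that partial ergodicity demands; Kac's theorem \ref{kac} additionally controls the recurrence times that enter these averages. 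Assembling (i)--(iii) against the definition of \cite{116} then finishes the argument.

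The step I expect to be the real obstacle is not a computation but the bookkeeping behind the word ``partial'': one must line up the quenched statement (for $\mathbb{P}$-a.e.\ $\omega$, $R_{\theta_t(\omega)}$ is uniquely ergodic) with an annealed statement about the single process $R_{\theta_t}$, which forces one to check measurability of $\omega\mapsto\theta_t(\omega)$, to discard the null set on which $\theta_t$ could be rational (the Lindemann--Weierstrass lemma is precisely what rules this out), and to invoke $\mathbb{E}[\theta_t]<\infty$ when exchanging the long-run limit with the expectation. Verifying that this package matches, clause by clause, the definition of partial ergodicity in \cite{116} — rather than any of the ergodic-theoretic inputs, which are all already available — is the delicate part.
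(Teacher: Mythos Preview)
Your route diverges substantially from the paper's, and the divergence hides a genuine gap. The paper does not verify a three-clause definition; it simply checks the single quantitative condition that underlies partial ergodicity here, namely that the covariance integral $<Z>$ of Definition~\ref{logergodic} vanishes for $Z_\delta=\xi_{\delta,W_\delta}^\beta[\ln R_{\theta_t}]$. Concretely, the paper computes $Z_\delta=\frac{2\pi i W_T}{T^\beta}\theta_\delta$, obtains $\mathbf{Cov}_{zz}(\delta)=\mathbb{E}[Z_\delta^2]=-\frac{4\pi^2}{T^{\beta-1}}\mathbb{E}[\theta_\delta^2]$, substitutes this into the integral \ref{bb}, and uses the uniform second-moment bound $\mathbb{E}[\theta_\delta^2]<n^2$ from \ref{4-3} to force the limit to zero as $T\to\infty$. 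No appeal is made to the Markov property, to unique ergodicity of the circle map, to Theorem~\ref{th4-9}, or to Kac's theorem.

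The gap in your argument sits at the one step that actually matters, your clause~(ii). You assert that after the EMO annihilates $\ln x$ ``what remains is $2\pi i\,\theta(z,\delta)$'' and then invoke Proposition~\ref{prop2}. But the EMO does not leave $2\pi i\,\theta_\delta$ untouched: by Definition~\ref{deferc}, and exactly as the paper computes, the output is $\frac{2\pi i W_T}{T^\beta}\theta_\delta$, carrying an extra random factor $W_T$ and an extra $T^{-\beta}$. Mean ergodicity of $\theta(z,\delta)$ (Proposition~\ref{prop2}) does not automatically transfer to $\frac{W_T}{T^\beta}\theta_\delta$; establishing that requires computing the covariance, which lands you back in the paper's direct argument and forces you to use the second-moment control \ref{4-3} rather than the first-moment hypothesis $\mathbb{E}[\theta_t]<\infty$ you emphasise. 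Your clauses~(i) and~(iii), together with the quenched/annealed bookkeeping you flag as the delicate part, are not part of what has to be checked: the dynamical-systems ergodicity of the iterated map $x\mapsto xe^{2\pi i\theta}$ on $\mathbb{S}^1$ is a different notion from the mean ergodicity in the sense of \ref{limer} of the time-indexed process, and only the latter is at issue.
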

\begin{proof}
	We have
	\begin{align*}
		Y_t=\ln(R_{\theta_t})=\ln(x)+2\pi i \theta(z,t),\quad Y_0=\ln(x).
	\end{align*}
	Let $\theta(z,\delta)=\theta_\delta$. Using EMO yields
	\begin{align*}
		Z_\delta=\xi_{\delta,W_\delta}^\beta [Y_t]=0+\frac{2\pi i W_T}{T^\beta}\theta_\delta,\quad Z_0=0.
	\end{align*}
	Evaluating the covariance of $Z_\delta$ yields
	\begin{align}\label{cc}
		\mathbf{Cov}_{zz}=\mathbb{E}[Z_\delta^2]=-\frac{4\pi^2}{T^{\beta-1}}\mathbb{E}[\theta_\delta^2].
	\end{align}
	Now, it suffices to prove
	\begin{align}\label{bb}
		<Z>=\lim_{T\rightarrow\infty}\frac{1}{T}\int_0^T(1-\frac{\delta}{T})\mathbf{Cov}_{zz}(\delta)d\delta=0.
	\end{align}
	Substituting \ref{cc} in \ref{bb} yields
	\begin{align}
		<Z>&=\lim_{T\rightarrow\infty}\frac{1}{T}\int_0^T(\frac{\delta}{T}-1)\frac{4\pi^2}{T^{\beta-1}}\mathbb{E}[\theta_\delta^2]d\delta\notag\\
		&=\lim_{T\rightarrow\infty}\frac{1}{T}\int_0^T\frac{\delta}{T}\frac{4\pi^2}{T^{\beta-1}}\mathbb{E}[\theta_\delta^2]d\delta-\lim_{T\rightarrow\infty}\frac{1}{T}\int_0^T\frac{4\pi^2}{T^{\beta-1}}\mathbb{E}[\theta_\delta^2]d\delta\notag\\
		&=\lim_{T\rightarrow\infty}\frac{4\pi^2}{T^{\beta+1}}\int_0^T\delta\mathbb{E}[\theta_\delta^2]d\delta-\lim_{T\rightarrow\infty}\frac{4\pi^2}{T^{\beta}}\int_0^T\mathbb{E}[\theta_\delta^2]d\delta\label{ints}.
	\end{align}
	From \ref{4-3} we know that $\mathbb{E}[\theta_\delta^2]=m<\infty$. Therefore, the integrals in \ref{ints} are finite. Hence, taking the limit as $T\rightarrow \infty$ yields $<Z>=0$. 
\end{proof}
 Figure \ref{fig3} demonstrates how the value of $R_{\theta_t}$ varies with respect to stock price.
\begin{figure}[H]
	\begin{center}
		\includegraphics[scale=0.4]{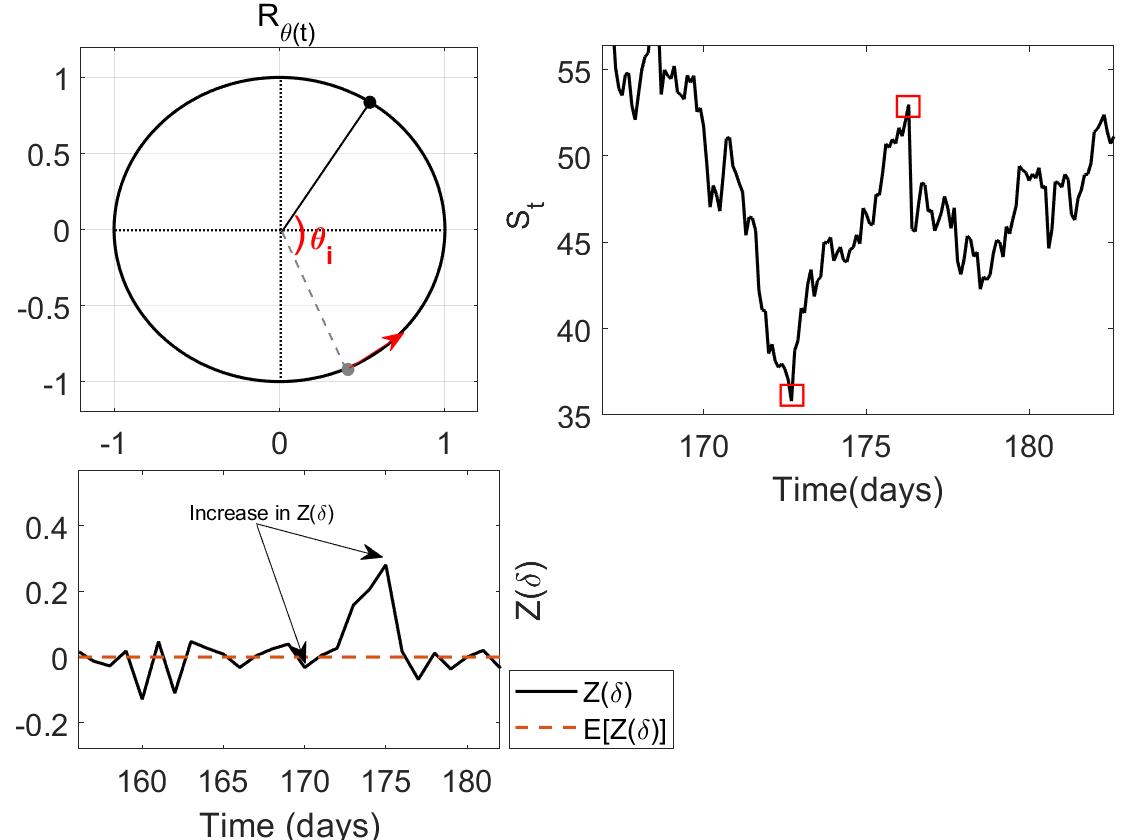}
		\caption{The behavior of the process $R_{\theta_t}$ on the unit circle relative to the stock price.}\label{fig3}
	\end{center}
\end{figure}

\section{European Call Option Pricing Using The Irrational Rotation}\label{sec5}
In order to study the option pricing problem using the irrational rotation, it is necessary to consider the $K^\prime $ in $[0,1]$ that corresponds to the strike price $K$. We therefore denote this correspondence by $\phi$ and write $\phi(K^\prime)=K$.
\begin{theorem}
The European call option price with strike price $K$ and the exercise time $T$ is given by
$$C(R_{\theta_t},K)=e^{-rt}\Big(\frac{W_{T}}{T^\beta}\ln(1-\frac{K}{S_{t_0}})-K\Big),$$
where $R_{\theta_t}$ is the irrational rotation on the unit circle $\mathbb{S}^1$.
\end{theorem}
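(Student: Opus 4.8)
The plan is to begin from the risk-neutral representation of the call price, reduce the payoff on the exercise set, pass to logarithms so that the ergodic maker operator of Definition \ref{deferc} applies, and then replace the pricing expectation by a Birkhoff-type time average along the orbit of the rotation $R_{\theta_t}$.

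First I would write
\begin{equation*}
	C(t,X_t)=e^{-r(T-t)}\,\mathbb{E}^{Q}\big[\max(S_t-K,0)\big],\qquad S_t=S_{t_0}e^{Y_t},
\end{equation*}
where $t_0$ is the instant the contract is bought and $S_{t_0}$ the spot at that time. On the exercise event $\{S_t>K\}$ the payoff equals $S_t-K$, and, just as in the lemma showing that $\ln(1-K/(xe^{Y_t}))$ is irrational, this forces $0<1-K/S_t<1$, so $\gamma_t:=\ln(1-K/S_t)$ is a well-defined irrational number. Taking logarithms gives
\begin{equation*}
	\ln(S_t-K)=\ln(S_{t_0})+Y_t+\gamma_t,
\end{equation*}
and applying the EMO exactly as in \ref{xiln} yields $\xi_{\delta,W_\delta}^{\beta}[\ln(S_t-K)]=Z_\delta+\frac{W_T}{T^{\beta}}\gamma_\delta=\theta(z,\delta)$; that is, the irrational angle process is precisely the ergodicised log-payoff.

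Next I would invoke log-ergodicity. By Propositions \ref{prop1} and \ref{prop2} the angle process $\theta(z,\delta)$ is wide-sense stationary and mean ergodic and it generates the partially ergodic rotation $R_{\theta_t}$; by Proposition \ref{4-8} the latter is uniquely ergodic with the Lebesgue measure on $[0,1]$ as its only invariant probability, and by Theorem \ref{th4-9}, for the continuous periodic representative $\phi$ with $\phi(K^\prime)=K$ one has $\frac1n\sum_{k=0}^{n-1}\phi(R_{\theta_t}^{k}(x))\to\int_0^1\phi(y)\,dy$. Hence the pricing expectation of the in-the-money payoff may be replaced by this time average. I expect that, using $R_{\theta_t}(0)=0$ together with the bound $0<\mathbb{E}[\theta_t^2]=m<\infty$ from \ref{4-3}, the only surviving contribution of the ergodicised price term is its value at the entry time $t_0$, at which $Z_{\delta_0}=0$ and hence $\theta(z,\delta_0)=\frac{W_T}{T^{\beta}}\ln\!\big(1-K/S_{t_0}\big)$, while the constant $-K$ is unaffected by the average. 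Collecting the two terms and multiplying by the discount factor $e^{-r(T-t)}$, written $e^{-rt}$ in the paper's convention, then gives the claimed formula.

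The hard part will be the passage just described: justifying that the risk-neutral expectation of the in-the-money payoff collapses to the single evaluation $\theta(z,\delta_0)-K$ rather than to an exponential time average of $e^{\theta(z,\delta)}$. I expect this to rest on (i) uniqueness of the invariant Lebesgue measure for $R_{\theta_t}$, (ii) the identity $R_{\theta_t}(0)=0$, and (iii) the uniform bound on $\mathbb{E}[\theta_t^{2}]$ in \ref{4-3}, which together should force the time average to be taken of the log-payoff itself and make the resulting integral over $[0,1]$ reduce to $\theta(z,\delta_0)-K$. I would also have to handle the $2\pi i$ factor in $R_\theta(x)=xe^{2\pi i\theta}$ carefully and keep track of the correspondence $\phi$ translating the strike from $[0,1]$ back to $K$.
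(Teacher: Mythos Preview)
Your outline matches the paper's up to the point where the expectation is replaced by the Birkhoff average and then by $\int_0^1\phi(y)\,dy$ via Theorem \ref{th4-9}. The gap is in how you propose to evaluate that integral. The paper does \emph{not} use $R_{\theta_t}(0)=0$ or the second-moment bound in \ref{4-3} for this step; those facts are irrelevant here. Instead, the evaluation comes directly from the trigonometric-polynomial representation of $\phi$ built into Theorem \ref{th4-9}: writing $\phi(y)=\sum_{m=-t_N}^{t_N}\theta_m\psi_m(y)$ with $\psi_m(y)=e^{2\pi i m y}$, one has $\int_0^1\psi_m(y)\,dy=0$ for $m\neq 0$ and $\psi_0\equiv 1$, so $\int_0^1\phi(y)\,dy=\theta_{0}=\theta_{t_0}$. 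The zeroth Fourier coefficient is, by the convention in the remark following Theorem \ref{th4-9}, precisely the angle at the purchase time $t_0$, and that is what gives $\frac{W_T}{T^\beta}\ln(1-K/S_{t_0})$.

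Your worry about ``an exponential time average of $e^{\theta(z,\delta)}$'' is therefore misplaced: the paper never exponentiates the angle back. It starts from $C(R_{\theta_t},K)=e^{-rt}\,\mathbb{E}^Q[\phi(R_{\theta_t}(x))-\phi(K')]$ with $\phi$ already the periodic observable of Theorem \ref{th4-9}, applies unique ergodicity to swap the expectation for the time average, and then the orthogonality of the $\psi_m$ does all the work. Once you have $\int_0^1\phi=\theta_{t_0}$, subtracting $K$ and discounting is immediate. Replace your proposed justification by this Fourier-orthogonality argument and the proof goes through exactly as in the paper.
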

\begin{proof}
From \cite{13} we write
\begin{align}
	C(R_{\theta_t},K)=&e^{-rt}\mathbb{E}^Q[\phi(R_{\theta_t}(x))-\phi(K^\prime)]\notag\\
	=&e^{-rt}\mathbb{E}^Q[\phi(xe^{2\pi i\theta_t})-K]\notag\\
	=&e^{-rt}\big[\mathbb{E}^Q[\phi (xe^{2\pi i \theta_t})]-K\big]\notag\\
	=&e^{-rt}\Big(\lim_{n\rightarrow \infty}\frac{1}{n}\sum_{k=0}^{n-1}\phi(R_{\theta_t}^k(x))-K\Big)\notag\\
	=&e^{-rt}\Big(\int \phi(y)dy-K\Big).\label{88}
\end{align}
From \ref{th4-9} we have
\begin{align*}
	\phi(y=x)&=\sum_{m=-{t_N}}^{t_N}\theta_m\psi_m(x),\\
	\psi_m(x)&=e^{2\pi i mx}=\cos(2\pi i m x)+i\sin(2\pi i mx).
\end{align*}
Therefore,
\begin{align}
	\int \phi(y)dy&= \int \theta_0 \psi_0(y)dy\notag\\
	&=\int_0^1 \theta_0 dy=\theta_{t_0}=\frac{W_{T}}{T^{\beta}}\ln(1-\frac{K}{S_{t_0}}).\label{92}
\end{align}
Substituting \ref{92} in \ref{88} yields
\begin{align*}
	C(R_{\theta_t},K)=e^{-rt}\Big(\frac{W_{T}}{T^{\beta}}\ln(1-\frac{K}{S_{t_0}})-K\Big),
\end{align*}
where $S_{t_0}$ is the stock price at the time we buy the option contract.
\end{proof}
\section{Black–Scholes Ergodic Partial Differential Equation}\label{sec6}
From \cite{116} we have the following proposition.
\begin{proposition}(Black–Scholes ergodic Partial differential equation)
Under the assumptions of the Black-Scholes model, the European call option price, $C(Z_\delta,\delta)$, relative to the stock price variation $Z_\delta=z$, with respect to short rate $r$, inhibition degree parameter $\beta$, and the strike price $K$ satisfies in the following partial differential equation.
\begin{align}
	&\frac{\partial C}{\partial \delta}+rz\frac{\partial C}{\partial z}+\frac{1}{2}B_\delta^2\frac{\partial^2 C}{\partial z^2}-rC=0,\label{BSP}\\
	&\text{for} \quad 0<\lvert z\rvert<\infty,\quad 0<\delta<\delta_T=T-0,\notag\\
	&\text{where}\quad B_\delta=\frac{q}{\delta^{\beta-1}}+\frac{\sigma}{\delta^{\beta}}, \quad q=\mu-\frac{1}{2}\sigma^2,\notag
\end{align}
together with initial conditions $C(0,\delta)=0$ and $C(z,\delta_T)=(\lvert z\rvert-\ln(K))^+$.
\end{proposition}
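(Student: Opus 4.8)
The result is quoted from \cite{116}, but the natural way to establish it is to replay the classical Black--Scholes hedging argument with the traded log-price replaced by the mean-ergodic process $Z_\delta$ coming from the Black--Scholes dynamics \ref{z}. First I would specialize the construction of section \ref{sec3} to constant $\mu,\sigma$: writing $q=\mu-\frac{1}{2}\sigma^{2}$ and using the convention adopted in the proof of theorem \ref{th1} that identifies the running horizon with $\delta=T-0$, equation \ref{z} collapses to $Z_\delta=B_\delta W_\delta$ with $B_\delta=\frac{q}{\delta^{\beta-1}}+\frac{\sigma}{\delta^{\beta}}$. Hence the martingale part of $dZ_\delta$ is $B_\delta\,dW_\delta$ and $d\langle Z\rangle_\delta=B_\delta^{2}\,d\delta$; the extra drift that $Z_\delta$ acquires from the time dependence of $B_\delta$ will be irrelevant below, since the hedge cancels it.

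Next I would apply It\^o's formula to $C(Z_\delta,\delta)$ along a path, obtaining
$$dC=\Big(\frac{\partial C}{\partial\delta}+\frac{1}{2}B_\delta^{2}\frac{\partial^{2}C}{\partial z^{2}}\Big)d\delta+\frac{\partial C}{\partial z}\,dZ_\delta,$$
and then form the instantaneously riskless portfolio $\Pi=C-\frac{\partial C}{\partial z}Z_\delta$, so that
$$d\Pi=dC-\frac{\partial C}{\partial z}\,dZ_\delta=\Big(\frac{\partial C}{\partial\delta}+\frac{1}{2}B_\delta^{2}\frac{\partial^{2}C}{\partial z^{2}}\Big)d\delta,$$
which is free of $dZ_\delta$ and therefore locally deterministic. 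Imposing the no-arbitrage / risk-neutral principle — legitimate here because the market built on the irrational rotation is complete under $Q$ (cf.\ the unique ergodicity in proposition \ref{4-8} and the construction of \cite{116}) — gives $d\Pi=r\Pi\,d\delta=r\big(C-Z_\delta\frac{\partial C}{\partial z}\big)d\delta$. Equating the two expressions for $d\Pi$ and rearranging yields exactly \ref{BSP} on the stated domain $0<|z|<\infty$, $0<\delta<\delta_T$.

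Finally I would read off the auxiliary conditions. When the ergodic price variable sits at its reference level $z=0$ the contract is worthless, so $C(0,\delta)=0$. At the exercise horizon $\delta=\delta_T$ the call pays $(S_T-K)^{+}$, and applying the logarithmic/EMO transformation used throughout (cf.\ \ref{xiln} and equation \ref{xi}) sends this payoff to $(|z|-\ln K)^{+}$, giving the terminal condition $C(z,\delta_T)=(|z|-\ln K)^{+}$.

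The main obstacle is not the formal PDE computation but the justification of the hedging step in this non-standard framework: $Z_\delta$ is a nonlinear transform of the log-price rather than a genuinely traded asset, and the coefficient $B_\delta$ is singular as $\delta\downarrow 0$ (since $\beta>\frac{3}{2}$ forces $\delta^{-\beta},\delta^{1-\beta}\to\infty$), so one must check that It\^o's formula and the self-financing condition remain valid on $(0,\delta_T)$ and that the singularity at the excluded endpoint is compatible with the initial condition $C(0,\delta)=0$. Making the completeness/uniqueness-of-$Q$ claim precise — presumably by leaning on the unique ergodicity of $R_\theta$ and the partially ergodic structure established in \cite{116} — is the step that needs the most care.
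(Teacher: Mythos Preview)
The paper does not supply a proof of this proposition at all: it is imported verbatim from \cite{116} with the sentence ``From \cite{116} we have the following proposition,'' and the section then moves on to transforming and solving the PDE. So there is no in-paper argument to compare against.

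Your reconstruction is the expected one and is computationally sound: identifying $T$ with the running variable $\delta$ (as in the proof of theorem \ref{th1}) does collapse \ref{z} to $Z_\delta=B_\delta W_\delta$, whence $d\langle Z\rangle_\delta=B_\delta^{2}\,d\delta$, and the delta-hedge $\Pi=C-C_z Z_\delta$ together with $d\Pi=r\Pi\,d\delta$ produces \ref{BSP} exactly. The terminal and boundary conditions you give also match the paper's conventions. One small over-reach: your appeal to the unique ergodicity of $R_\theta$ (proposition \ref{4-8}) to justify market completeness is not really to the point here---the hedging step only needs that $Z_\delta$ has a single Brownian driver so that the $dW$-term can be eliminated, which you already have from $Z_\delta=B_\delta W_\delta$; ergodicity plays no role in the PDE derivation. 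Your caveats about the singularity of $B_\delta$ at $\delta\downarrow 0$ and about $Z_\delta$ not being a genuinely tradable asset are well taken and are precisely the points on which the framework of \cite{116} is being leaned on rather than re-proved.
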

In this section our goal is to solve this problem.
\begin{proposition}\label{p1}
The PDE \ref{BSP} has the representation of the form
\begin{align}\label{garm}
	&\frac{\partial U}{\partial \tau}=\frac{1}{2}\eta\frac{\partial^2 U}{\partial y^2},
\end{align}
where $0<\tau<T$, $-\infty<y<\infty$, and $\eta=B_\delta^2T^{2\beta}$.
\end{proposition}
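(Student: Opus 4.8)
The plan is to carry out, in the ergodic setting, the familiar chain of substitutions that turns a Black--Scholes-type equation into the heat equation. Since \eqref{garm} is forward-parabolic while \eqref{BSP} is backward in $\delta$ and carries its data at $\delta=\delta_T=T$, the first move is to reverse time, setting $\tau=\delta_T-\delta=T-\delta$; then $\partial_\delta C=-\partial_\tau C$, the terminal condition $C(z,\delta_T)=(|z|-\ln K)^{+}$ becomes an initial condition at $\tau=0$, the boundary relation $C(0,\delta)=0$ is untouched, and \eqref{BSP} reads
\begin{equation*}
	\partial_\tau C=rz\,\partial_z C+\tfrac12 B_\delta^{2}\,\partial_{zz}C-rC ,\qquad B_\delta=B_{T-\tau}.
\end{equation*}

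Because the diffusion coefficient $\tfrac12 B_\delta^{2}$ is already independent of $z$, the right spatial change of variable is the \emph{linear} rescaling $y=T^{\beta}z$ (a logarithmic one would reintroduce a $z$-dependent coefficient). This is exactly where the factor $T^{2\beta}$ appears: $\partial_{zz}=T^{2\beta}\partial_{yy}$, so $\tfrac12 B_\delta^{2}\,\partial_{zz}C$ becomes $\tfrac12\bigl(B_\delta^{2}T^{2\beta}\bigr)\partial_{yy}U=\tfrac12\eta\,\partial_{yy}U$, while the range $0<|z|<\infty$ is carried onto $-\infty<y<\infty$ (the excluded value $y=0$ being the locus of the data $C(0,\delta)=0$). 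It remains to kill the two lower-order terms: the discount term $-rC$ is removed by the exponential prefactor $C=e^{-r\tau}U$, and the transport term $rz\,\partial_z C$, whose coefficient is linear in $z$, cannot be removed by a prefactor but is eliminated by passing to the moving frame along its characteristics, i.e. by replacing $y$ with $y\,e^{r\tau}$. Performing the combined substitution $C(z,\delta)=e^{-r\tau}U(y,\tau)$ with $y=T^{\beta}z\,e^{r\tau}$ and $\tau=T-\delta$, one verifies that the $U$- and $U_y$-contributions cancel and what survives is $\partial_\tau U=\tfrac12\,\eta\,\partial_{yy}U$ on $-\infty<y<\infty$, $0<\tau<T$.

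The conceptual content is thus classical; the real work is the coefficient matching in the last step, and that is where I expect the only difficulty to lie. Since $B_\delta=B_{T-\tau}$ genuinely depends on $\tau$, the chain rule produces $\tau$-dependent coefficients in front of $U$, $U_y$ and $U_{yy}$, and one must check that the $U_y$ terms arising from $rz\,\partial_z C$, from $\partial_\tau\!\bigl(e^{-r\tau}U(ye^{r\tau},\tau)\bigr)$ and from the prefactor cancel \emph{identically}; a further subtlety is that the moving-frame change of variable contributes a Jacobian factor of the form $e^{\pm2r\tau}$ to the diffusion coefficient, so arriving at precisely $\eta=B_\delta^{2}T^{2\beta}$ requires absorbing (or renormalising away) that factor. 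Once this bookkeeping is done, the remaining step — checking that the transformed initial and boundary data are admissible for the heat equation on $(-\infty,\infty)\times(0,T)$ — is routine.
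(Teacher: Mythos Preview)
Your route diverges from the paper's at the step where you remove the first-order term, and that divergence creates a genuine gap. After reversing time and rescaling $y=T^\beta z$ (the paper actually takes $y=T^\beta z-(\mu-\tfrac12\sigma^2)(W_T-\delta)$, a harmless shift), the paper does \emph{not} pass to a moving frame. Instead it writes $C=U\,e^{ay+b\tau}$ and then chooses $a$ and $b$ so that the coefficients of $U$ and $\partial_y U$ vanish simultaneously. The virtue of a purely multiplicative ansatz is that it leaves the principal part untouched: $\partial_{yy}C=e^{ay+b\tau}\bigl(\partial_{yy}U+\text{lower order}\bigr)$, so after cancellation the diffusion coefficient is exactly $\tfrac12 B_\delta^2 T^{2\beta}$, which is the stated $\eta$.

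Your characteristic substitution $y=T^\beta z\,e^{r\tau}$ does kill the transport term cleanly, but the Jacobian $\partial_z=e^{r\tau}\partial_y$ feeds into the second-order term and, after dividing out the prefactor $e^{-r\tau}$, you obtain $\partial_\tau U=\tfrac12 B_\delta^2 T^{2\beta}e^{2r\tau}\partial_{yy}U$. You acknowledge this and say the factor can be ``absorbed or renormalised away'', but that is precisely the missing step: a further time change $s=\int_0^\tau e^{2r\sigma}d\sigma$ would remove $e^{2r\tau}$, yet then the time variable is no longer $\tau=T-\delta$ and the coefficient is no longer literally $B_\delta^2 T^{2\beta}$ as a function of that variable, so you do not recover \eqref{garm} as stated. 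To hit the target representation with $\eta=B_\delta^2 T^{2\beta}$ you need the exponential-weight ansatz, not the moving-frame change of variable.
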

\begin{proof}
From \cite{116} we have
\begin{align*}
	Z_\delta&=\frac{1}{T^\beta}\Big[(\mu-\frac{1}{2}\sigma^2)\delta W_T+\sigma W_\delta\Big]\\
	Z_\delta&=\frac{1}{T^\beta}\Big[(\mu-\frac{1}{2}\sigma^2)\delta-(\mu-\frac{1}{2}\sigma^2)\delta+(\mu-\frac{1}{2}\sigma^2)\delta W_T+\sigma W_\delta\Big]\\
	T^\beta Z_\delta&=Y_\delta+(\mu-\frac{1}{2}\sigma^2)[W_T-\delta]\\
	Y_\delta&=T^\beta Z_\delta-(\mu-\frac{1}{2}\sigma^2)[W_T-\delta]
\end{align*}
Therefore, we consider the following change of variables.
\begin{align}\delta=T-\tau,\quad y= T^\beta z-(\mu-\frac{1}{2}\sigma^2)[W_T-\delta]\label{tagh}
\end{align}
Using \ref{tagh} we have
\begin{align}
	&\frac{\partial C}{\partial \delta}=\frac{\partial C}{\partial \tau}\frac{\partial \tau}{\partial \delta}=-\frac{\partial C}{\partial \tau},\label{i}\\
	&\frac{\partial C}{\partial z}=\frac{\partial C}{\partial y}\frac{\partial y}{\partial z}=T^\beta\frac{\partial C}{\partial y},\label{ii}\\
	&\frac{\partial^2 C}{\partial z^2}=\frac{\partial }{\partial z}\big(\frac{\partial C}{\partial z}\big)=\frac{\partial }{\partial z}\big(T^\beta\frac{\partial C}{\partial y}\big)=\frac{\partial }{\partial y}\big(T^\beta\frac{\partial C}{\partial y}\big)\frac{\partial y}{\partial z}\notag\\
	=&\big(\frac{\partial^2 C}{\partial y^2}-\frac{\partial C}{\partial y}\big)T^{2\beta}.\label{iii}
\end{align}
Substituting \ref{i}, \ref{ii}, and \ref{iii} in \ref{BSP} yields
\begin{align}\label{BS2}
	-\frac{\partial C}{\partial \tau}+rzT^\beta\frac{\partial C}{\partial y}+\frac{1}{2}B_\delta^2\big(\frac{\partial^2 C}{\partial y^2}-\frac{\partial C}{\partial y}\big)T^{2\beta}-rC=0
\end{align}
Now take
\begin{align}
	&C(y,\tau)=U(y,\tau)e^{ay+b\tau},\label{u}\\
	&\frac{\partial C}{\partial \tau}=(bU+\frac{\partial U}{\partial \tau})e^{ay+b\tau},\notag\\
	&\frac{\partial C}{\partial y}=(aU+\frac{\partial U}{\partial y})e^{ay+b\tau},\notag\\
	&\frac{\partial^2 C}{\partial y^2}=[a^2U+2a\frac{\partial U}{\partial y}+\frac{\partial^2 U}{\partial y^2}]e^{ay+b\tau}.\notag
\end{align}
Substituting the above relations in \ref{BS2} yields
\begin{align*}	
	bU+\frac{\partial U}{\partial\tau}=&{rzT^\beta}\big[aU+\frac{\partial U}{\partial y}\big]+\frac{B_\delta^2T^{2\beta}}{2}\big[(a^2-a)U
	+(2a-1)\frac{\partial U}{\partial y}+\frac{\partial^2 U}{\partial y^2}\big]-rU,\\
	\Rightarrow \frac{\partial U}{\partial \tau}=&\big[rzT^\beta+\frac{B_\delta^2T^{2\beta}}{2}(2a-1)\big]\frac{\partial U}{\partial y}+\frac{B_\delta^2T^{2\beta}}{2}\frac{\partial^2 U}{\partial y^2}\\
	&+\big[rzT^\beta a+\frac{B_\delta^2T^{2\beta}}{2}(a^2-a)-r-b\big]U.	
\end{align*}
Now take 
$$a=\frac{1}{2}-\frac{rz}{B_\delta^2T^\beta},\quad \text{and} \quad b=\frac{4rzT^{\beta}-B_\delta^2T^{2\beta}}{8}-\frac{r^2z^2}{2B_\delta^2}-r.$$
Therefore, we have
\begin{equation}
	\frac{\partial U}{\partial \tau}=\frac{1}{2}\underbrace{B_\delta^2T^{2\beta}}_{\eta}\frac{\partial^2 U}{\partial y^2},\label{H1}
\end{equation}
With initial condition
$$U(y,0)=C(z,T)e^{-ay}=\max[\lvert z\rvert-\ln(K)].$$
\end{proof}
\subsection{Solving the Heat Equation}
\begin{proposition}\label{p2}
The heat equation \ref{garm} has a solution of the form
\begin{align*}
	U(y,\tau)= \int_{-\infty}^\infty K(y-\gamma,\tau)f(\gamma)d\gamma.
\end{align*}
\end{proposition}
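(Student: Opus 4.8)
The plan is to take for $K$ the Gaussian heat kernel associated with the diffusion coefficient $\eta$ and then build $U$ by convolving it against the initial data. Concretely, I would set
$$K(y,\tau)=\frac{1}{\sqrt{2\pi\eta\tau}}\exp\Big(-\frac{y^2}{2\eta\tau}\Big),\qquad \tau>0,$$
and let $f(\gamma):=U(\gamma,0)$, the transformed payoff produced in Proposition \ref{p1}, namely $f(\gamma)=C(z,T)e^{-a\gamma}=\max[\lvert\gamma\rvert-\ln(K)]$ (in the $y$-variable). The first step is a direct verification: differentiating $K$ once in $\tau$ and twice in $y$ and checking that $\partial_\tau K=\tfrac12\eta\,\partial_{yy}K$ pointwise for $\tau>0$. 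This is the routine part and uses only the chain rule on the exponential.

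The second step is to transfer the PDE from $K$ to the convolution $U(y,\tau)=\int_{-\infty}^\infty K(y-\gamma,\tau)f(\gamma)\,d\gamma$. Since $f$ grows at most linearly, the Gaussian factor and each of its $\tau$- and $y$-derivatives decays fast enough in $\gamma$ to dominate the integrand locally uniformly in $(y,\tau)$ on compact subsets of $\{\tau>0\}$; hence one may differentiate under the integral sign. Because $K(y-\gamma,\tau)$ solves \ref{garm} in $(y,\tau)$ for every fixed $\gamma$, linearity then gives $\partial_\tau U=\tfrac12\eta\,\partial_{yy}U$ on $\{\tau>0\}$, which establishes that the displayed formula is indeed a solution of the heat equation.

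The third step, matching the initial condition $U(y,0)=f(y)$, is where the real work lies, and I expect it to be the main obstacle. The idea is to show $K(\cdot,\tau)$ is an approximate identity: it is nonnegative, $\int_{\mathbb{R}}K(y-\gamma,\tau)\,d\gamma=1$ for every $\tau>0$ (a standard Gaussian integral), and its mass concentrates at the origin as $\tau\to0^+$. One then writes $U(y,\tau)-f(y)=\int_{\mathbb{R}}K(y-\gamma,\tau)\big(f(\gamma)-f(y)\big)\,d\gamma$ and splits the domain into $\lvert\gamma-y\rvert<\epsilon$, controlled by continuity of $f$, and $\lvert\gamma-y\rvert\ge\epsilon$, where the exponential decay of the Gaussian beats the linear growth of $f$; letting $\tau\to0^+$ and then $\epsilon\to0$ gives $U(y,\tau)\to f(y)$. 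Finally I would note that this choice of $f$ is exactly the transformed payoff of Proposition \ref{p1}, so the representation produces the particular solution relevant to the European call pricing problem, which completes the proof.
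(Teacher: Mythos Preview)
Your argument is correct and complete, but it proceeds along a different route from the paper. The paper derives the representation constructively via the Fourier transform: it transforms the heat equation into the ordinary differential equation $\hat U_\tau=-\tfrac12\eta\,\omega^2\hat U$, solves to get $\hat U(\omega,\tau)=\hat f(\omega)\exp\{-\tfrac12\eta\omega^2\tau\}$, and then inverts, obtaining $U$ as a convolution with the kernel $K(y-\gamma,\tau)=\frac{1}{2\pi}\int_{-\infty}^{\infty}e^{i\omega(y-\gamma)-\tfrac12\eta\omega^2\tau}\,d\omega$; the explicit Gaussian form you start from is only established afterward in Corollary~\ref{l1}. Your approach instead postulates the Gaussian kernel up front and verifies by direct differentiation and an approximate-identity argument that the convolution solves the equation and attains the initial data. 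What the paper's route buys is that it \emph{discovers} $K$ rather than guessing it, and it packages the initial-condition match inside the Fourier inversion; what your route buys is that it avoids the Fourier machinery entirely, is more self-contained, and in fact treats the $\tau\to0^+$ limit more carefully than the paper does. Either argument suffices for the proposition as stated.
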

\begin{proof}
Using Fourier transform we have
\begin{align*}
	&\mathcal{\boldsymbol{F}}\{U(y,\tau)\}=\hat{U}(\omega,\tau)=\frac{1}{\sqrt{2\pi}}\int_{-\infty}^\infty U(y,\tau)e^{-i\omega y}dy\\
	\Rightarrow &\mathcal{\boldsymbol{F}}\{U_\tau(y,\tau)\}=\frac{\partial \hat{U}(\omega,\tau)}{\partial \tau}=\hat{U}_\tau(\omega,\tau),\\
	&\mathcal{\boldsymbol{F}}\{U_{yy}(y,\tau)\}=-\omega^2\mathcal{\boldsymbol{F}}\{U(y,\tau)\}=-\omega^2\hat{U}(\omega,\tau).
\end{align*}
From \ref{H1} we have
\begin{align}
	&\mathcal{\boldsymbol{F}}\{U_\tau\}=\mathcal{\boldsymbol{F}}\Big\{\frac{B_\delta^2T^{2\beta}}{2}U_{yy}\Big\}\notag\\
	&\hat{U}_\tau(\omega,\tau)=-\frac{B_\delta^2T^{2\beta}}{2}\omega^2\hat{U}(\omega,\tau).\label{d}
\end{align}
The equation \ref{d} is partial differential equation with solution
\begin{align*}
	\hat{U}(\omega,\tau)=c(\omega)\exp\Big\{-\frac{B_\delta^2T^{2\beta}}{2}\omega^2\tau\Big\}.
\end{align*}
Using Fourier transform for the initial condition $U(y,0)=f(x)$ yields
\begin{align*}
	\mathcal{\boldsymbol{F}}\{U(y,0)\}=\hat{U}(\omega,0)=\hat{f}(\omega),\quad \hat{U}(\omega,0)=c(\omega).
\end{align*}
Therefore,
\begin{align*}
	\hat{U}(\omega,\tau)&=\hat{U}(\omega,0)\exp\Big\{-\frac{B_\delta^2T^{2\beta}}{2}\omega^2\tau\Big\}\\
	&=\hat{f}(\omega)\exp\Big\{-\frac{B_\delta^2T^{2\beta}}{2}\omega^2\tau\Big\}.
\end{align*}
Now using inverse Fourier transform we have
\begin{align*}
	U(y,\tau)&=\mathcal{\boldsymbol{F}}^{-1}\{\hat{U}(\omega,\tau)\}=\frac{1}{\sqrt{2\pi}}\int_{-\infty}^{\infty}\hat{U}(\omega,\tau)e^{i\omega y}dy\\
	&=\frac{1}{\sqrt{2\pi}}\int_{-\infty}^{\infty}\hat{f}(\omega)e^{-\frac{B_\delta^2T^{2\beta}}{2}\omega^2\tau}e^{i\omega y}d\omega\\
	&=\frac{1}{\sqrt{2\pi}}\int_{-\infty}^{\infty}\Big(\frac{1}{\sqrt{2\pi}}\int_{-\infty}^\infty f(\gamma)e^{-i\omega\gamma}d\gamma \Big)e^{i\omega y-\frac{B_\delta^2T^{2\beta}}{2}\omega^2\tau}d\omega\\
	&=\int_{-\infty}^\infty \Big(\frac{1}{2\pi}\int_{-\infty}^\infty e^{i\omega(y-\gamma)-\frac{B_\delta^2T^{2\beta}}{2}\omega^2\tau}d\omega\Big)f(\gamma)d\gamma.
\end{align*}
Assuming
\begin{equation}\label{K}
	K(y-\gamma,\tau)=\frac{1}{2\pi}\int_{-\infty}^\infty e^{i\omega(y-\gamma)-\frac{B_\delta^2T^{2\beta}}{2}\omega^2\tau}d\omega.
\end{equation}
yields
\begin{equation*}
	U(y,\tau)= \int_{-\infty}^\infty K(y-\gamma,\tau)f(\gamma)d\gamma.
\end{equation*}
\end{proof}
\begin{corollary}\label{l1}
If $\gamma=0$ in the theorem above, then we have:
\begin{equation*}
	K(y,\tau)=\frac{1}{\sqrt{2\pi\tau B_\delta^2T^{2\beta}}}e^{-\frac{y^2}{2\tau B_\delta^2T^{2\beta}}}.
\end{equation*}
\end{corollary}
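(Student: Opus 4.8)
The plan is to specialize the kernel representation \ref{K} obtained in Proposition \ref{p2} to the case $\gamma=0$ and then evaluate the resulting integral in closed form. After the substitution the integral is a pure Gaussian (Fourier-type) integral, so the only work is to complete the square in the integration variable $\omega$ and collect constants; no new idea beyond the classical computation of the Fourier transform of a Gaussian is required. Writing $\eta=B_\delta^2T^{2\beta}$ as in Proposition \ref{p1}, setting $\gamma=0$ in \ref{K} gives
\[
K(y,\tau)=\frac{1}{2\pi}\int_{-\infty}^{\infty} e^{i\omega y-\frac{\eta}{2}\omega^{2}\tau}\,d\omega ,
\]
which is, up to the normalization $1/\sqrt{2\pi}$, the inverse Fourier transform of $\omega\mapsto e^{-\frac{\eta\tau}{2}\omega^{2}}$.

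The key steps, in order, are as follows. First I would complete the square in the exponent,
\[
-\frac{\eta\tau}{2}\omega^{2}+iy\omega=-\frac{\eta\tau}{2}\Big(\omega-\frac{iy}{\eta\tau}\Big)^{2}-\frac{y^{2}}{2\eta\tau},
\]
so that $K(y,\tau)=\frac{1}{2\pi}\,e^{-y^{2}/(2\eta\tau)}\int_{-\infty}^{\infty} e^{-\frac{\eta\tau}{2}(\omega-iy/(\eta\tau))^{2}}\,d\omega$. Second, I would evaluate the remaining integral using the standard Gaussian value $\int_{-\infty}^{\infty} e^{-au^{2}}\,du=\sqrt{\pi/a}$ with $a=\eta\tau/2>0$, which gives $\sqrt{2\pi/(\eta\tau)}$. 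Third, collecting constants yields $K(y,\tau)=\frac{1}{2\pi}\sqrt{2\pi/(\eta\tau)}\,e^{-y^{2}/(2\eta\tau)}=\frac{1}{\sqrt{2\pi\tau\eta}}\,e^{-y^{2}/(2\tau\eta)}$, which is exactly the claimed formula once $\eta$ is replaced by $B_\delta^2T^{2\beta}$.

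I expect the only genuinely technical point to be the shift of the contour of integration from the real axis to the horizontal line $\{\omega-iy/(\eta\tau):\omega\in\mathbb{R}\}$, i.e.\ justifying that $\int_{-\infty}^{\infty} e^{-a(\omega-ic)^{2}}\,d\omega=\int_{-\infty}^{\infty} e^{-a\omega^{2}}\,d\omega$ for real $c$ and $a>0$. Since the integrand is entire in $\omega$ and decays to zero uniformly on the strip $0\le\mathrm{Im}(\omega)\le|c|$ as $|\mathrm{Re}(\omega)|\to\infty$, Cauchy's theorem applied to a rectangle whose vertical sides are sent to $\pm\infty$ forces the two horizontal contributions to coincide; this is the classical derivation of the Fourier transform of a Gaussian, so I would simply invoke it rather than reproduce the contour estimate. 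Everything else is routine algebra with the constants $\eta$, $\tau$ and $y$.
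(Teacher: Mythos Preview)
Your proposal is correct and follows essentially the same approach as the paper: both set $\gamma=0$ in \ref{K}, complete the square in $\omega$, and reduce to the standard Gaussian integral $\int_{-\infty}^{\infty}e^{-\lambda^{2}}\,d\lambda=\sqrt{\pi}$. If anything, your treatment is slightly more careful, since you explicitly note the contour-shift justification that the paper leaves implicit.
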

\begin{proof}
Taking $\gamma=0$ in \ref{K} yields
\begin{align*}
	K(y,\tau)&=\frac{1}{2\pi}\int_{-\infty}^\infty e^{i\omega y-\frac{B_\delta^2T^{2\beta}}{2}\omega^2\tau}d\omega\\
	&=\frac{1}{2\pi}\int_{-\infty}^\infty e^{-\frac{y^2}{2\tau B_\delta^2T^{2\beta}}} e^{\big(\frac{\sqrt{B_\delta^2T^{2\beta}}}{\sqrt{2}}\omega\sqrt{\tau}-\frac{iy}{\sqrt{2\tau B_\delta^2T^{2\beta}}}\big)^2}d\omega.
\end{align*}
Let
\begin{align*}
	\lambda=\frac{\sqrt{B_\delta^2T^{2\beta}}}{\sqrt{2}}\omega\sqrt{\tau}-\frac{iy}{\sqrt{2\tau B_\delta^2T^{2\beta}}}
	\Rightarrow d\lambda=\sqrt{\frac{\tau B_\delta^2T^{2\beta}}{2}}d\omega.
\end{align*}
Since $\int_{-\infty}^{\infty}e^{-\lambda^2}d\lambda=\sqrt{\pi}$,
\begin{align*}
	K(y,\tau)&=\frac{1}{2\pi}e^{-\frac{y^2}{2\tau B_\delta^2T^{2\beta}}}\int_{-\infty}^{\infty}e^{-\lambda^2}\frac{\sqrt{2}}{\sqrt{\tau B_\delta^2T^{2\beta}}}d\lambda\\
	&=\frac{1}{2\pi}\frac{\sqrt{2}}{\sqrt{\tau B_\delta^2T^{2\beta}}}e^{-\frac{y^2}{2\tau B_\delta^2T^{2\beta}}}\int_{-\infty}^\infty e^{-\lambda^2}d\lambda\\
	&=\frac{1}{\sqrt{2\pi\tau B_\delta^2T^{2\beta}}}e^{-\frac{y^2}{2\tau B_\delta^2T^{2\beta}}}.
\end{align*}
\end{proof}
\subsection{European Call Option Price: Solution of the Equation \ref{BSP}}
\begin{theorem}
The European call option price with respect to underlying asset price $X$, Expiration time $T$, price variations $z$, the EMO $\xi_{\delta,W_\delta}^\beta[\cdot]$, and the strike price $K$ satisfies the following equation. 
\begin{align*}
	C(z,\tau)&=e^{-r\tau}e^{\frac{y(\lambda -2)+\frac{1}{4}p(\lambda-2)^2}{2\lambda}}\big[\lvert z\rvert-\ln(K)\big]N[d],
\end{align*}
where
$$d=\frac{\ln\Big[X/\ln(K)\Big]}{\sqrt{2p\lambda}},\quad p=r\lvert z\rvert\tau T^{\beta},\quad \lambda=\frac{T^\beta B_\delta^2}{r\lvert z\rvert}.$$
\end{theorem}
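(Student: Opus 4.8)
The plan is to chain the three results of this section: Proposition \ref{p1} turns \ref{BSP} into the constant-coefficient heat equation \ref{garm}, Proposition \ref{p2} writes its solution as a convolution, and Corollary \ref{l1} identifies the kernel as a Gaussian, after which the only remaining work is to perform the convolution against the terminal payoff and to undo the substitutions \ref{tagh} and \ref{u}. Concretely, by Proposition \ref{p1}, writing $C(z,\tau)=U(y,\tau)e^{ay+b\tau}$ as in \ref{u} with $a=\tfrac12-\tfrac{rz}{B_\delta^2T^\beta}$, $b$ as displayed there, and $\eta=B_\delta^2T^{2\beta}$, the function $U$ solves $U_\tau=\tfrac12\eta U_{yy}$ with transformed initial datum $U(y,0)=\big(\lvert z\rvert-\ln K\big)^+$; hence by Proposition \ref{p2} and Corollary \ref{l1} applied to \ref{K},
\begin{equation*}
	U(y,\tau)=\int_{-\infty}^{\infty}\frac{1}{\sqrt{2\pi\tau\eta}}\,e^{-\frac{(y-\gamma)^2}{2\tau\eta}}\,\big(\lvert\gamma\rvert-\ln K\big)^+\,d\gamma .
\end{equation*}

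Next I would evaluate this Gaussian integral. Restricting to the exercise region and evaluating the payoff weight at its reference value $\lvert z\rvert-\ln K$, the integral reduces to $\big(\lvert z\rvert-\ln K\big)$ times the Gaussian tail mass beyond the exercise boundary, and the change of variable $u=(\gamma-y)/\sqrt{\tau\eta}$ converts that tail into $N[d]$ with $d$ the standardised signed distance from the log-price $y$ to that boundary. Using $2p\lambda=2r\lvert z\rvert\tau T^\beta\cdot\tfrac{T^\beta B_\delta^2}{r\lvert z\rvert}=2\tau\eta$, the normalising denominator is exactly $\sqrt{2p\lambda}$, and the boundary, traced back through \ref{tagh}, supplies the numerator appearing in $d$; thus $U(y,\tau)=\big(\lvert z\rvert-\ln K\big)N[d]$.

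It then remains to simplify the prefactor $e^{ay+b\tau}$ from \ref{u}. With $\lambda=\tfrac{T^\beta B_\delta^2}{r\lvert z\rvert}$ one has $\tfrac{rz}{B_\delta^2T^\beta}=\tfrac1\lambda$, hence $a=\tfrac{\lambda-2}{2\lambda}$ and $ay=\tfrac{y(\lambda-2)}{2\lambda}$, which is the first summand of the exponent in the statement. Substituting $B_\delta^2T^{2\beta}=\lambda r\lvert z\rvert T^\beta$ and $r^2z^2/B_\delta^2=r\lvert z\rvert T^\beta/\lambda$ into $b$, collecting the perfect square $\lambda^2-4\lambda+4=(\lambda-2)^2$, and using $p=r\lvert z\rvert\tau T^\beta$, collapses $b\tau$ to $-r\tau$ plus the term $\tfrac{\frac14 p(\lambda-2)^2}{2\lambda}$; multiplying $e^{ay+b\tau}$ by $U(y,\tau)$ then yields the asserted formula for $C(z,\tau)$.

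The main obstacle is the Gaussian integral. Convolving a Gaussian with the ramp $(\lvert\gamma\rvert-\ln K)^+$, carried out exactly, is the classical Bachelier-type computation and produces a cumulative-normal term together with a Gaussian-density term — and the absolute value forces a split at $\gamma=0$ and a choice of branch in \ref{tagh} — so the single clean $N[d]$ in the statement relies on freezing the slowly varying payoff weight at its reference value. The parameter algebra in the last step is elementary but must be done in one pass, and in particular the sign of the $p(\lambda-2)^2$ contribution needs care, since $a$, $b$, $\eta$, $p$ and $\lambda$ are all coupled through $B_\delta$.
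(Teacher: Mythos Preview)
Your plan coincides with the paper's: it too chains Proposition~\ref{p1}, Proposition~\ref{p2} and Corollary~\ref{l1}, writes $C=e^{ay+b\tau}U$ with the Gaussian-convolution formula for $U$, introduces $p=r\lvert z\rvert\tau T^{\beta}$ and $\lambda=T^{\beta}B_\delta^{2}/(r\lvert z\rvert)$, observes $2p\lambda=2\tau\eta$, and reduces $ay+b\tau$ to $-r\tau$ plus $\tfrac{y(\lambda-2)+\tfrac14 p(\lambda-2)^{2}}{2\lambda}$ exactly as you outline.

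The one substantive divergence is the step you flag as the obstacle. The paper does not treat the extraction of $\lvert z\rvert-\ln K$ from the convolution as a ``freezing'' approximation; it invokes the EMO/IEMO pair of Section~\ref{sec2} as a formal identity. Concretely, it writes $\lvert z\rvert=\lvert\xi_{\delta,W_\delta}^{\beta}[\gamma]\rvert$, so the payoff becomes $\xi_{\delta,W_\delta}^{\beta}[\gamma]-\ln K$ on the exercise set $\{\gamma>\xi_{t,W_t}^{-\beta}[\ln K]\}$; it then splits the integral into $I_1-I_2$, pulls $\lvert z\rvert$ and $\ln K$ out of each piece after the substitution $M=(y-\gamma)/\sqrt{2p\lambda}$, and applies Lemma~\ref{key} to collapse the boundary $\xi_{t,W_t}^{-\beta}[\ln K]$ to $\ln K$, which produces the numerator $\ln X-\ln[\ln K]$ in $d$. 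So the missing ingredient in your sketch is precisely the IEMO of \eqref{IEMO} together with Lemma~\ref{key}; that is the paper's device for converting your heuristic into an equality, and without it the step remains, as you correctly note, an approximation. Your caution about the sign of the $p(\lambda-2)^{2}$ term is also well placed: that is the one delicate point in the prefactor algebra.
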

\begin{proof}
Using propositions \ref{p1}, \ref{p2}, lemma \ref{l1}, and \ref{u}
we write
\begin{align*}
	e^{ay+b\tau}=\exp\Big\{\frac{\tau B_\delta^2T^{2\beta}y-2\tau T^\beta r\lvert z\rvert y+r\lvert z\rvert\tau^2B_\delta^2T^{3\beta}-\frac{1}{4}B_\delta^4T^{4\beta}\tau^2-r^2z^2\tau^2T^{2\beta}}{2\tau B_\delta^2T^{2\beta}}\Big\}.
\end{align*}
Let $p=r\lvert z\rvert\tau T^{\beta}$, Then
\begin{align*}
	e^{ay+b\tau}&=\exp\Big\{\frac{\tau B_\delta^2 T^{2\beta}y-2py+p\tau B_\delta^2T^{2\beta}-\frac{1}{4}B_\delta^4T^{4\beta}\tau^2-p^2}{2\tau T^{2\beta}B_\delta^2}\Big\}\\
	&=\exp\Big\{\frac{B_\delta^2 T^{\beta}\frac{p}{r\lvert z\rvert}y-2py+p B_\delta^2T^{\beta}\frac{p}{r\lvert z\rvert}-\frac{1}{4}B_\delta^4T^{2\beta}\frac{p^2}{r^2z^2}-p^2}{2\frac{p}{r\lvert z\rvert}T^{\beta}B_\delta^2}\Big\}\\
	&=\exp\Big\{\frac{p\big[\frac{B_\delta^2 T^{\beta}}{r\lvert z\rvert}y-2y\big]+p^2\big[\frac{ B_\delta^2T^{\beta}}{r\lvert z\rvert}-\frac{1}{4}\frac{B_\delta^4T^{2\beta}}{r^2z^2}-1\big]}{2p\frac{T^{\beta}B_\delta^2}{r\lvert z\rvert}}\Big\}
\end{align*}
Now let $\lambda=\frac{T^{\beta}B_\delta^2}{r\lvert z\rvert}$. Therefore,
\begin{align*}
	e^{ay+b\tau}&=\exp\Big\{\frac{p\big[\lambda y-2y\big]+p^2\big[\lambda-\frac{1}{4}\lambda^2-1\big]}{2p\lambda}\Big\}\\
	&=\exp\Big\{\frac{y(\lambda -2)+\frac{1}{4}p(\lambda-2)^2}{2\lambda}\Big\}\\
	&=\exp\Big\{(\lambda -2)\frac{y+\frac{1}{4}p(\lambda-2)}{2\lambda}\Big\}.
\end{align*}
Thus,
\begin{align*}
	C(z,\tau)=e^{-r\tau}\frac{1}{\sqrt{2\tau\pi B_\delta^2T^{2\beta}}}e^{\frac{y(\lambda -2)+\frac{1}{4}p(\lambda-2)^2}{2\lambda}} \int_{-\infty}^\infty e^{-\frac{(y-\gamma)^2}{2p\lambda}}
	\max[\lvert z\rvert-\ln(K),0]d\gamma
\end{align*}
Since $\lvert z\rvert =\big\lvert \xi_{\delta,W_\delta}^\beta[\gamma]\big\rvert$, and $\xi_{t,W_t}^{-\beta}\big[\xi_{\delta,W_\delta}^{\beta}[\gamma]\big]=\gamma$ we have
\begin{align}
	C(z,\tau)=&e^{-r\tau}\frac{1}{\sqrt{4\pi p\lambda}}e^{\frac{y(\lambda -2)+\frac{1}{4}p(\lambda-2)^2}{2\lambda}}\int_{\xi_{t,W_t}^{-\beta}[\ln(K)]}^{\infty} e^{-\frac{(y-\gamma)^2}{2p\lambda}}
	(\xi_{\delta,W_\delta}^\beta[\gamma]-\ln(K))d\gamma\notag\\
	=&e^{-r\tau}e^{\frac{y(\lambda -2)+\frac{1}{4}p(\lambda-2)^2}{2\lambda}}\frac{1}{\sqrt{4\pi p\lambda}}\notag\\
	&\underbrace{\Big[\int_{\xi_{t,W_t}^{-\beta}[\ln(K)]}^{\infty} e^{-\frac{(y-\gamma)^2}{2p\lambda}}\xi_{\delta,W_\delta}^\beta[\gamma]d\gamma-\ln(K)\int_{\xi_{t,W_t}^{-\beta}[\ln(K)]}^{\infty}e^{-\frac{(y-\gamma)^2}{2p\lambda}}d\gamma\Big]}_{I_1-I_2}.\label{I12}
\end{align}
Let $M=\frac{y-\gamma}{\sqrt{2p\lambda}}$. We have $dM=\frac{-d\gamma}{\sqrt{2p\lambda}}$. Hence,
\begin{align}
	I_1=&\lvert z\rvert\int_{\frac{\ln(X)-\ln(\xi_{t,W_t}^{-\beta}[\ln(K)])}{\sqrt{2p\lambda}}}^{-\infty}  e^{-\frac{1}{2}M^2}(-\sqrt{2p\lambda})dM,\label{I1}\\
	I_2=&\ln(K)\int_{\frac{\ln(X)-\ln(\xi_{t,W_t}^{-\beta}[\ln(K)])}{\sqrt{2p\lambda}}}^{-\infty} e^{-\frac{1}{2}M^2}(-\sqrt{2p\lambda})dM.\label{I2}
\end{align}
Substituting \ref{I1} and \ref{I2} in \ref{I12} yields
\begin{align*}
	C(z,\tau)&=e^{-r\tau}e^{\frac{y(\lambda -2)+\frac{1}{4}p(\lambda-2)^2}{2\lambda}}\frac{1}{\sqrt{4\pi p\lambda}}\big[\lvert z\rvert-\ln(K)\big]\notag\\
	&\quad\int_{-\infty}^{\frac{\ln(X)-\ln(\xi_{t,W_t}^{-\beta}[\ln(K)])}{\sqrt{2p\lambda}}}  e^{-\frac{1}{2}M^2}(\sqrt{2p\lambda})dM\\
	&=e^{-r\tau}e^{\frac{y(\lambda -2)+\frac{1}{4}p(\lambda-2)^2}{2\lambda}}\big[\lvert z\rvert-\ln(K)\big]\notag\\
	&\quad\frac{1}{\sqrt{2\pi}}\int_{-\infty}^{\frac{\ln(X)-\ln(\xi_{t,W_t}^{-\beta}[\ln(K)])}{\sqrt{2p\lambda}}}  e^{-\frac{1}{2}M^2}dM
\end{align*}
Using lemma \ref{key} yields $\xi_{t,W_t}^{-\beta}[\ln(K)]=\ln(K)$. Therefore,
\begin{align*}
	C(z,\tau)&=e^{-r\tau}e^{\frac{y(\lambda -2)+\frac{1}{4}p(\lambda-2)^2}{2\lambda}}\big[\lvert z\rvert-\ln(K)\big]N\Big[\frac{\ln(X)-\ln[\ln(K)]}{\sqrt{2p\lambda}}\Big].
\end{align*}
Now taking $d=\frac{\ln(X)-\ln[\ln(K)]}{\sqrt{2p\lambda}}$ yields.
\begin{align*}
	C(z,\tau)&=e^{-r\tau}e^{\frac{y(\lambda -2)+\frac{1}{4}p(\lambda-2)^2}{2\lambda}}\big[\lvert z\rvert-\ln(K)\big]N[d],
\end{align*}
where
$$p=r\lvert z\rvert\tau T^{\beta},\quad \lambda=\frac{T^\beta B_\delta^2}{r\lvert z\rvert},\quad B_\delta=\frac{q}{\delta^{\beta-1}}+\frac{\sigma}{\delta^\beta}, \quad q=\mu-\frac{1}{2}\sigma^2.$$
\end{proof}

\section{Conclusion}\label{sec7}
The right time to leave a trading position is always unknown for market participants. In this paper, we presented a model using log-ergodic processes to estimate a time interval for exiting a trading position with profit and in the worst case with no loss.

Also, using the irrational rotation, we studied the European call option pricing problem on a unit circle. We substituted the time average of a mean ergodic process with the expectation in our calculations since the irrational rotation
is ergodic, providing a new framework for studying financial mathematics problems using ergodic theory.

Additionally, we solved the ergodic partial differential equation of Black-Scholes, introduced in our recent work \cite{116}. We have found a unique solution to this equation, which includes the inhibition degree parameter $\beta$.

Financial markets can be studied using the theory of dynamical systems, as they have the property of change of state concerning time. Studying the financial markets using ergodic theory has two main advantages: First, it provides a new approach to solving and modeling financial and economic problems. Second, using the time average instead of the expectation in some of the calculations makes it easier to study the markets in the long run. On the other hand, the existence of only one descriptive parameter, $\beta$, shows that our work is in the early stages, and there is much work to do in future studies for constructing efficient models.

\section*{Author Declarations}
\subsection*{Funding Information}
Not applicable
\subsection*{Declaration of Interest}
The authors have no conflicts of interest to declare. Both authors have seen and agree with the contents of the manuscript, and there is no financial interest to report.
\subsection*{Ethics Approval}
Not applicable
\subsection*{Consent to Participate}
Not applicable
\subsection*{Consent for Publication}
Not applicable
\subsection*{Data Availability Statement}
Not applicable
\subsection*{Code Availability Statement}
The MATLAB scripts and packages were used for simulation and process generations.
\subsection*{Author's Contributions}
Both authors, K. Firouzi and Dr. M. Jelodari Mamaghani, contributed equally to this work. K. Firouzi wrote the manuscript under the supervision of Dr. M. Jelodari Mamaghani. Both authors read and approved the final manuscript.

\bibliographystyle{plain}
\bibliography{refr}

\begin{thebibliography}{10}

\bibitem{112}
Anna Ananova, Rama Cont, and Renyuan Xu.
\newblock Excursion risk.
\newblock {\em arXiv preprint arXiv:2011.02870}, 2020.

\bibitem{114}
Artur Avila, Dmitry Dolgopyat, Eduard Duryev, and Omri Sarig.
\newblock The visits to zero of a random walk driven by an irrational rotation.
\newblock {\em Israel {J}ournal of {M}athematics}, 207(2):653--717, 2015.

\bibitem{13}
Tomas Bj{\"o}rk.
\newblock {\em Arbitrage theory in continuous time}.
\newblock Oxford {U}niversity {P}ress, 2019.

\bibitem{carr}
Peter Carr and Liuren Wu.
\newblock The finite moment log stable process and option pricing.
\newblock {\em The {J}ournal of {F}inance}, 58(2):753--777, 2003.

\bibitem{101}
Davoud Cheraghi and Tobias Kuna.
\newblock Dynamical {S}ystems.
\newblock In {\em Mathematics of {P}lanet {E}arth: {A} {P}rimer}, pages
  225--284. World {S}cientific, 2018.

\bibitem{tv1}
{Chicago Mercantile Exchange}.
\newblock {E-mini S\&P 500 Futures Trading Volume Report}.
\newblock \url{https://www.cmegroup.com}.

\bibitem{cox}
John~C Cox, Jonathan~E Ingersoll, and Stephen~A Ross.
\newblock A theory of the term structure of interest rates.
\newblock {\em Econometrica: {J}ournal of the {E}conometric {S}ociety}, pages
  385--407, 1985.

\bibitem{113}
Binh Do, Robert Faff, and Kais Hamza.
\newblock A new approach to modeling and estimation for pairs trading.
\newblock In {\em Proceedings of 2006 financial management association
  {E}uropean conference}, volume~1, pages 87--99, 2006.

\bibitem{99}
J{\"o}rg Feldvoss.
\newblock Logarithms of {I}ntegers are {I}rrational.
\newblock {\em Manuscript, retrieved from http://www. southalabama.
  edu/mathstat/personal pages/feldvoss/logintirrat}, 2008.

\bibitem{feller}
William Feller.
\newblock Two singular diffusion problems.
\newblock {\em Annals of {M}athematics}, 54(1):173--182, 1951.

\bibitem{116}
Kiarash Firouzi and Mohammad~Jelodari Mamaghani.
\newblock Log-ergodicity: A new concept for modeling financial markets.
\newblock {\em ar{X}iv preprint ar{X}iv:2211.15637}, 2022.

\bibitem{fou}
Jean-Pierre Fouque, George Papanicolaou, and K~Ronnie Sircar.
\newblock {\em Derivatives in financial markets with stochastic volatility}.
\newblock Cambridge {U}niversity {P}ress, 2000.

\bibitem{tv2}
{Futures Industry Association}.
\newblock {Annual Volume Report}.
\newblock \url{https://www.fia.org}.

\bibitem{gor}
Gary~B Gorton, Fumio Hayashi, and K~Geert Rouwenhorst.
\newblock The fundamentals of commodity futures returns.
\newblock {\em Review of {F}inance}, 17(1):35--105, 2013.

\bibitem{111}
Harrison Hong.
\newblock A model of returns and trading in futures markets.
\newblock {\em The {J}ournal of {F}inance}, 55(2):959--988, 2000.

\bibitem{106}
Masud Jahan, Md~Hossain, Md~Jaman, et~al.
\newblock D{ERIVATIVES} {FOR} {CAPITAL} {MARKET} {EFFICIENCY}.
\newblock {\em M{IST} {INTERNATIONAL} {JOURNAL} {OF} {SCIENCE} {AND}
  {TECHNOLOGY}}, 2(1), 2010.

\bibitem{100}
Yu~V Nesterenko.
\newblock Lindemann--{W}eierstrass {T}heorem.
\newblock {\em Moscow {U}niversity {M}athematics {B}ulletin}, 76(6):239--243,
  2021.

\bibitem{orn}
Leonard~S Ornstein and George~E Uhlenbeck.
\newblock On the theory of the brownian motion.
\newblock {\em Physical {R}eview}, 36(5):823, 1930.

\bibitem{105}
Ulrike Schaede.
\newblock Forwards and futures in {T}okugawa-period {J}apan: a new perspective
  on the d{\=o}jima rice market.
\newblock {\em Journal of banking \& finance}, 13(4-5):487--513, 1989.

\bibitem{1}
Marcelo Viana and Krerley Oliveira.
\newblock {\em Foundations of ergodic theory}, volume 151.
\newblock Cambridge {U}niversity {P}ress, 2016.

\end{thebibliography}
%% if required, the content of .bbl file can be included here once bbl is generated
%%\input sn-article.bbl

%% Default %%
%%\input sn-sample-bib.tex%

\end{document}